\newtheorem{theo}{Theorem}[section]
\newtheorem*{theo*}{Theorem}
\newtheorem{algo}{Algorithm}
\newtheorem*{conj*}{Conjecture}
\newtheorem{coro}{Corollary}
\newtheorem{prop}[theo]{Proposition}
\newtheorem{rema}{Remark}
\theoremstyle{definition} \newtheorem{exam}{Example} \newtheorem*{defi}{Definition}
\newcommand{\CC}{\mathbb{C}}
\newcommand{\PP}{\mathbb{P}}
\newcommand{\ZZ}{\mathbb{Z}}
\newcommand{\cI}{\mathcal{I}}
\newcommand{\cO}{\mathcal{O}}
\newcommand{\cM}{\mathcal{M}}
\newcommand{\cT}{\mathcal{T}}
\def\and{\quad\mathrm{and}\quad}
\def\beq{\begin{equation}}
\def\eeq{\end{equation}}
\newcommand{\id}{\mathds{1}}
\title[]{Local Euler Obstructions of Reflective Projective Varieties}
 \author{Xiping Zhang}
 \email{xzhmath@gmail.com}
\thanks{MSC Classification: 14C17  32S25 32S05  32S50}
\date{\today}
\begin{document}
\maketitle

\begin{abstract}
In this note we introduce the concept of reflective projective varieties. These are stratified projective varieties with certain dimension constraints on their dual varieties. We prove that for such varieties, the Chern-Schwartz-MacPherson classes of the strata completely determine the local Euler obstructions and the polar degrees.  We also propose an algorithm to compute the local  Euler obstructions when such varieties are formed by group orbits. As examples we compute the local Euler obstructions of quadratic hypersurfaces and ordinary determinantal varieties to illustrate our method.
\end{abstract}
\section{Introduction}
The local geometric invariants around singular points are essential objects to study in algebraic geometry and singularity theory. Among all sorts of invariants, the local Euler obstructions are of central importance in the study of singular spaces. Defined as the obstruction to extend the distance $1$-form  after lifting to the Nash transform; 
it is the key ingredient in MacPherson's  
proof of the existence and uniqueness of Chern class on singular spaces, which was conjectured by Deligne and Grothendieck. For more about local Euler obstructions we refer to the nice papers \cite{B-NG} and \cite{B-NG2}. 
Such Chern class is equivalent to a previous construction of Schwartz (Cf. \cite{MR629125}\cite{MR35:3707}\cite{MR32:1727}), thus is called Chern-Schwartz-MacPherson class.

In \cite{BDK} the authors proved that the local Euler obstruction is equal to the local characteristics defined by Kashiwara in \cite{Kashiwara73}, in which he introduced the famous index theorem for holonomic D-modules. As the key ingredients in both singularity theory and Kashiwara's index theorem, the local Euler obstructions of stratified spaces have been deeply studied in singularity theory.  
In particular when such stratification is induced by a  group action, following the work of Kazhdan-Luzstig \cite{K-L80} the local Euler obstructions are closely related to the coefficients of the Kazhdan-Luzstig polynomials. Thus they are also intensely studied in geometric representation theory. 
Despite the great importance, the local Euler obstructions are in general very hard to compute.  
Many authors have been working on formulas that make the computation easier. We refer to  \cite{BLS} \cite{BFL} \cite{Gonzalez}    \cite{Rod-Wang2} \cite{M-R20}   \cite{Le-Tessier81} for many different approaches. 

In this note we consider a special type of projective varieties: the reflective projective varieties. These are stratified varieties with certain dimension constraints on the dual varieties of the strata. The precise definition is given in Definition~\ref{defi; reflectiveProjVar}. A common example of reflective projective varieties is the projectivizations of orbital closures of a $G$-representation $V$, where $G$ is a connected algebraic group acting on $V$ by finite orbits. We call such orbits reflective group orbits. 
We take advantage of Aluffi's duality result in \cite{Aluffi16}, which relates the Chern-Mather classes between dual varieties, to prove the main result of this paper: 
\begin{theo*}[Theorem\ref{theo; reflectiveProjVar}, Corollary\ref{coro; polar}]
Let $V$ be a vector space of dimension $N$. 
Let $X=\sqcup_{i=1}^n X_i\subset \PP(V)$ be a reflective projective variety, with its dual variety $Y=X^\vee$ stratified by $Y=\sqcup_{j=1}^s Y_j$. 
The Chern-Schwartz-MacPherson classes of the strata 
$$\{c_{sm}^{X_1}(H),\cdots ,c_{sm}^{X_n}(H); c_{sm}^{Y_1}(H), \cdots ,c_{sm}^{Y_s}(H) \}$$ 
completely determine the local Euler obstruction functions $Eu_X$ and $Eu_Y$, and thus the Chern-Mather classes $c_M^X(H)$, $c_M^Y(H)$, or equivalently the polar degrees $\{P_k(X), P_r(Y)|k,r\}$.  
\end{theo*}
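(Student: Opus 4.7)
The plan is to view Chern-Schwartz-MacPherson classes, Chern-Mather classes, and local Euler obstructions as entries of two coupled linear systems on $X$ and $Y$, cross-linked by Aluffi's duality involution, and to resolve these systems inductively on closure order.

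First I would pass from strata data to closure data: by the additivity of the MacPherson natural transformation $c_*$, the CSM classes of closures are obtained from those of strata by
\[
c_{sm}^{\bar X_i}(H) \;=\; \sum_{X_j \subset \bar X_i} c_{sm}^{X_j}(H),
\]
and similarly on $Y$. On a Whitney stratification the Euler obstruction function $Eu_{\bar X_i}$ is constant with value $e_{ij} := Eu_{\bar X_i}(X_j)$ on each stratum $X_j \subset \bar X_i$, with $e_{ii}=1$. Applying $c_*$ yields the lower-triangular linear system
\[
c_M^{\bar X_i}(H) \;=\; \sum_{X_j \subset \bar X_i} e_{ij}\, c_{sm}^{X_j}(H),
\]
and the analogous system on $Y$ with unknowns $e'_{jk}$. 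Thus determining all local Euler obstructions is equivalent to determining the Chern-Mather classes of all closures from the CSM classes of strata.

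The cross-link is Aluffi's duality theorem from \cite{Aluffi16}: for any irreducible projective variety $Z \subset \PP(V)$, the Chern-Mather class $c_M^{Z^\vee}(H)$ is obtained from $c_M^Z(H)$ by an explicit $\ZZ$-linear involution $\mathcal{I}$ on $\ZZ[H]/(H^N)$, essentially the involution controlling the symmetry of polar degrees. The reflectivity hypothesis asserts precisely that projective duality restricts to a bijection $\tau$ between the closures $\{\bar X_i\}$ and $\{\bar Y_j\}$, so for each $i$ one has
\[
c_M^{\bar Y_{\tau(i)}}(H) \;=\; \mathcal{I}\!\left(c_M^{\bar X_i}(H)\right),
\]
which is the crucial equation coupling the two triangular systems. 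I would then resolve the system by induction on closure order, running in increasing order on $X$ and in the correspondingly reversed order on $Y$. The minimal closure $\bar X_1$ is smooth, so $Eu_{\bar X_1} = \mathbf{1}$ and $c_M^{\bar X_1} = c_{sm}^{\bar X_1}$ is immediately known; applying $\mathcal{I}$ then determines $c_M^{\bar Y_{\tau(1)}}$, and matching this against the MacPherson expression $c_M^{\bar Y_{\tau(1)}} = \sum_l e'_{\tau(1), l}\, c_{sm}^{Y_l}$ coefficient-by-coefficient in $H$ pins down the new row of Euler obstructions on the $Y$-side. Iterating, each level of closure order contributes one new row of unknowns on each side with the Aluffi equation constraining it against previously computed data; this yields all $e_{ij}$ and $e'_{jk}$, hence $Eu_X$ and $Eu_Y$. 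The Chern-Mather classes, and via the standard linear change of basis the polar degrees $\{P_k(X), P_r(Y)\}$, then follow at once.

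The hard part will be verifying that the inductive step is well-posed: the coefficient equations provided by $\mathcal{I}$, combined with already-known Chern-Mather data, must uniquely determine each new row of Euler obstructions. This reduces to a linear-independence statement for the CSM classes of strata inside $\ZZ[H]/(H^{N+1})$ together with a rank match between the fresh unknowns and the new equations at each level. The reflectivity hypothesis—dimension constraints tailored precisely so that projective duality matches closures bijectively and in the correct codimensions—is what secures this rank match, and confirming it in the abstract setup will be the technical heart of the argument.
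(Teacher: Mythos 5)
Your overall architecture (triangular MacPherson systems on each side, cross-linked by Aluffi's involution, resolved by peeling) matches the paper's, but there is a genuine gap at exactly the point you flag as the technical heart, and it stems from a misreading of the reflectivity hypothesis. You assert that reflectivity ``asserts precisely that projective duality restricts to a bijection $\tau$ between the closures $\{\bar{X}_i\}$ and $\{\bar{Y}_j\}$'', and your induction hinges on the identity $c_M^{\bar{Y}_{\tau(i)}}=\cI(c_M^{\bar{X}_i})$, i.e.\ on each $\bar{X}_i^\vee$ being the closure of a stratum of $Y$. Definition~\ref{defi; reflectiveProjVar} requires no such thing: it only demands the strict dimension chain $\dim \bar{X}_1^\vee<\cdots<\dim \bar{X}_n^\vee$. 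The paper's first example already violates your assumption: for a singular quadric $X_A$ the deepest closure is the linear space $S_A=\PP^{n-r}$, whose dual is $\PP^{r-1}$, a linear space strictly \emph{containing} $Y=X_A^\vee$ rather than a stratum closure of $Y$. So your very first inductive step --- apply $\cI$ to $c_M$ of the minimal smooth closure and match the result against $\sum_l e'_{\tau(1),l}\,c_{sm}^{Y_l}$ --- has no valid target, since $\cI(c_M^{S_A})$ does not lie in the span of the $c_{sm}^{Y_l}$.

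The mechanism the paper actually uses is what fills the rank-match gap you leave open. One writes the single global identity $\cI\bigl(\sum_r\alpha_r c_{sm}^{X_r}\bigr)=\sum_s\beta_s c_{sm}^{Y_s}$, expands both sides in the signed Chern--Mather classes of the varieties $\bar{X}_i^\vee$ (which may be genuinely new varieties, not among the $\bar{Y}_j$) and of the $\bar{Y}_j$, and observes that reflectivity forces all of these polynomials to have pairwise distinct top degrees, with the single coincidence $\bar{Y}_1=\bar{X}_1^\vee$. Peeling coefficients from the top dimension downward then produces a unitriangular linear system in the $\alpha$'s and $\beta$'s, whence uniqueness; existence of a solution is supplied by the Euler obstructions themselves, since $c_M^X=\sum_i Eu_X(X_i)\,c_{sm}^{X_i}$ and Aluffi's theorem interchanges the signed Chern--Mather classes of $X$ and $Y$. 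Your induction would be essentially correct in the special case of reflective group orbits, where Proposition~\ref{prop; duality} does give a duality bijection on orbit closures, but as written it does not prove the theorem in the stated generality.
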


As pointed out in \cite{FRW18}\cite{RV18}, for certain group orbits the   Chern-Schwartz-MacPherson classes are precisely the solutions of some constraint equations coming from localizations and specializations/restrictions of the group action. These solutions are certain weight functions   corresponding to the $K$-theoretic stable envelops studied in \cite{Maulik-Okounkov}.  This provides a way to obtain the Chern-Schwartz-MacPherson classes   directly from representation theory, besides the classical intersection method. This allows more flexible on computing in practice. 
Thus in Corollary~\ref{coro; ReflectiveOrbits} and Algorithm~\ref{OrbAlgorithm} we prove  an effective algorithm to compute the local Euler obstructions for reflective group orbits, using the Chern-Schwartz-MacPherson classes. As applications  we consider   quadratic hypersurfaces and (ordinary) determinantal varieties. They are natural examples of reflective projective varieties and reflective group orbits, and in Corollary~\ref{coro; quadratic} and Corollary~\ref{coro; detvar} we show that how to apply this duality method on effectively computing their local Euler obstructions. We also compute the Milnor  classes for quadratic hypersurfaces. 

The paper is organized as follows. In \S\ref{S; preliminary} we briefly recall the definitions and the tools we will use, namely the theory of Chern-Schwartz-MacPherson classes and Aluffi's projective duality. Then in \S\ref{S; ReflectiveVar} we state and prove the main result of the paper. In sections \S\ref{S; quadratic} and \S\ref{S; DerVar} we show that our method can effectively compute the local Euler obstructions for quadratic hypersurfaces and (ordinary) determinantal varieties.

\section{Preliminary}
\label{S; preliminary}
\subsection{Chern-Schwartz-MacPherson Classes}
Let $X\subset \PP^N$ be a projective variety. The group of constructible function is defined as the abelian group generated by indicator functions $\id_V$ for all irreducible subvarieties $V\subset X$. We define the pushforward for a proper morphism $f\colon X\to Y$ as follows. For any closed subvariety $V\subset X$, the pushforward $F(f)(\id_V)(y)$ evaluates $\chi(f^{-1}(y)\cap V)$ for any $y\in Y$. 
This makes $F$ a functor from projective complex varieties  to  the Abelian group category. The group $F(X)$ has $\{\id_V\}$ as a natural base.

When the base field is $\CC$, 
in 1974  MacPherson defined a local  measurement for  singularities and 
names it the local Euler obstruction. He
proved that the local Euler obstruction functions $\{Eu_V|\text{V is a subvariety of X}\}$ also form a base for $F(X)$. He also constructed a natural transformation $c_*$ by sending  the local Euler obstruction function $Eu_V$ to the Chern-Mather class $c_M(V)$. The precise definition is left to the next subsection. 
The following theorem answered a conjecture by Deligne and Grothendieck: 
\begin{theo}[\cite{MAC}]
The map $c_*$ defined above is the unique natural transform from $F$ to the homology functor $H_*$ satisfying the following normalization property: $c_*(\id_X)=c(TX)\cap [X]$ when $X$ is smooth.
\end{theo}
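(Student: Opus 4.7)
The plan is to establish uniqueness first by a dévissage argument, then to verify that the construction $Eu_V \mapsto c_M(V)$ assembles into a natural transformation satisfying the normalization axiom.

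For uniqueness I would exploit that $F(X)$ is generated by indicator functions $\id_V$ for irreducible subvarieties $V \subseteq X$, and show by induction on $\dim V$ that $c_*(\id_V)$ is already forced by the two axioms. The base case is smoothness, where the normalization property fixes $c_*(\id_V) = c(TV) \cap [V]$. For a singular $V$, choose a resolution $\pi\colon \widetilde V \to V$ with $\widetilde V$ smooth and $\pi$ proper birational; then
\[
\pi_* \id_{\widetilde V} = \id_V + \beta,
\]
where $\beta \in F(V)$ is supported on the discriminant of $\pi$, hence on the singular locus of $V$, a subvariety of strictly smaller dimension. Naturality forces $c_*(\id_V) = \pi_*\bl c(T\widetilde V) \cap [\widetilde V] \br - c_*(\beta)$, and the right-hand side is determined by the inductive hypothesis applied to the lower-dimensional constructible function $\beta$.

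For existence I would invoke the fact (recalled in the excerpt) that $\{Eu_V\}$ is a second basis of $F(X)$, and define $c_*$ on this basis by $c_*(Eu_V) := c_M(V)$ extended $\ZZ$-linearly. To check normalization, note that when $V$ is smooth the Nash modification $\nu\colon \widehat V \to V$ is an isomorphism and its tautological bundle is $TV$; a direct check using the definition of $Eu_V$ as the obstruction to extending the radial $1$-form on the Nash transform gives $Eu_V = \id_V$. Hence $c_*(\id_V) = \nu_*\bl c(\widehat T) \cap [\widehat V] \br = c(TV) \cap [V]$, as required.

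The main obstacle is naturality: verifying $c_* \circ f_* = f_* \circ c_*$ for every proper morphism $f\colon X \to Y$. By linearity and the $\{Eu_W\}$-basis, it suffices to test naturality on $\alpha = Eu_W$ for an irreducible subvariety $W \subseteq X$; factoring $f$ through the graph immersion $\Gamma_f\colon X \hookrightarrow X \times Y$ followed by the projection $\pi_Y\colon X \times Y \to Y$ reduces the problem to (i) closed immersions, which are essentially formal, and (ii) projections from a product. The projection case is where the substantive work lies, and I would follow MacPherson's \emph{graph construction}: introduce a one-parameter family of modifications of the Nash blow-up of $W$ obtained from graphs of generic sections of the tautological bundle, identify the limit cycle as the parameter specializes, and compute how this limit cycle decomposes after pushforward under $f_*$ in terms of Euler obstructions of images of strata. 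This degeneration calculation, essentially a stratified Morse-theoretic analysis, is the principal technical hurdle. A cleaner modern alternative, due to Kennedy and Sabbah, reinterprets $c_*$ through characteristic cycles in $T^*X$, where naturality translates to proper pushforward of Lagrangian cycles and can be verified more conceptually. Once naturality is established on the basis $\{Eu_W\}$, linearity extends it to all of $F(X)$, completing existence and, together with uniqueness, the full theorem.
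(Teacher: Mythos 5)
This theorem is quoted from MacPherson's 1974 paper (the reference \cite{MAC}); the present paper gives no proof of it, so there is no in-paper argument to compare against. Your outline faithfully reproduces the architecture of MacPherson's original proof: uniqueness by resolution of singularities and induction on the dimension of the support, using that $\pi_*\id_{\widetilde V}-\id_V$ is a constructible function supported in strictly smaller dimension; existence by defining $c_*$ on the basis $\{Eu_V\}$ via $Eu_V\mapsto c_M(V)$ and checking the axioms. The uniqueness d\'evissage and the normalization check are essentially complete as written (the base case also quietly uses naturality for the closed immersion $V\hookrightarrow X$, which you should make explicit, and the whole induction rests on Hironaka's resolution, available here since the theorem is over $\CC$).

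The substantive issue is the one you flag yourself: naturality. The reduction through the graph factorization to closed immersions and projections is standard and fine, but the projection case --- the graph construction, the identification of the limit cycle of the deformed Nash blow-up, and the bookkeeping of local Euler obstructions of images of strata under pushforward --- is the entire technical content of MacPherson's paper, and in your proposal it is named rather than carried out. The same holds for the proposed Kennedy--Sabbah alternative: one would still need to prove that $Eu_V$ corresponds to the conormal cycle of $V$ under the characteristic-cycle map and that proper pushforward of Lagrangian cycles is compatible with pushforward of constructible functions. So the proposal is an accurate and well-organized roadmap, but not a self-contained proof; as a justification at the level of a citation it is consistent with how the paper itself treats the statement.
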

\begin{rema}
Another definition of Chern classes on singular varieties is due to M.-H. Schwartz, who uses obstruction theory and radial frames to construct such classes. Details of the construction can be found in~\cite{MR32:1727}~\cite{MR35:3707}~\cite{MR629125}. Also in~\cite{MR629125} it is shown that these classes correspond, by Alexander isomorphism, to the classes defined by MacPherson in the above theorem. 
\end{rema} 

The local Euler obstruction was originally defined via obstruction theory over $\CC$, as the obstruction to extend the lift of the distance $1$ form on the Nash transform. 
In \cite{Gonzalez} the author  gave an algebraic formula using the Nash blowup that works for  arbitrary algebraically closed field. An equivalent formula using conormal spaces was given by Sabbah in \cite{Sabbah}. 
In 1990 Kennedy modified Sabbah's Lagrangian intersections and proved the following generalization.
\begin{theo}[\cite{Kennedy}]
Replace the homology functor by the Chow functor, 
MacPherson's natural transformation extends to arbitrary algebraically closed field of characteristic $0$.
\end{theo}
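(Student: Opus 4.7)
My plan is to follow Kennedy's strategy of recasting MacPherson's construction entirely in terms of Lagrangian cycles in the cotangent bundle, so that every ingredient becomes algebraic and makes sense over any algebraically closed field $k$ of characteristic $0$, with the Chow functor $A_*$ replacing singular homology.

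First I would set up the algebraic replacements of the topological pieces in MacPherson's original argument. The functor $F$ of constructible functions still makes sense: generators $\id_V$ for $V \subset X$ closed irreducible, and proper pushforward defined by taking fiberwise Euler characteristics computed in $\ell$-adic cohomology (or equivalently in algebraic de Rham). Over a variety $V$ the Nash blowup $\widehat{V} \to V$ and its tautological Nash bundle $\widehat{T}V$ are purely algebraic, so the Chern--Mather class $c_M(V) := \nu_* \bigl(c(\widehat{T}V) \cap [\widehat{V}]\bigr) \in A_*(V)$ is well-defined. The local Euler obstruction $Eu_V(p)$ then admits an algebraic incarnation following Gonzalez-Sprinberg and Sabbah: either as the degree of a suitable Segre-type cycle on the exceptional divisor of the blowup of the maximal ideal of $p$ in $\widehat V$, or, more usefully for naturality, as an intersection multiplicity of the conormal variety $T^*_V X \subset T^*X$ with a generic fiber of a smooth function vanishing at $p$. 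I would verify algebraically that $\{Eu_V\}$ remains a $\ZZ$-basis of $F(X)$ by the same triangular-change-of-basis argument from $\{\id_V\}$, using only that $Eu_V(p) = 1$ for $p$ a smooth point of $V$ and that $Eu_V$ is constructible.

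With these pieces in hand I would \emph{define} the candidate natural transformation on the basis by $c_*(Eu_V) := c_M(V) \in A_*(X)$ and extend linearly. Normalization is then immediate: for smooth $X$ one has $\widehat X = X$ and $Eu_X = \id_X$, so $c_*(\id_X) = c(TX) \cap [X]$. The serious content is naturality: for a proper morphism $f\colon X \to Y$ one must verify
\begin{equation*}
f_* \circ c_* \;=\; c_* \circ F(f).
\end{equation*}
Here I would adopt Kennedy's Lagrangian-cycle reformulation: send a constructible function $\alpha = \sum a_V \, Eu_V$ to the conic Lagrangian cycle $L(\alpha) := \sum (-1)^{\dim V} a_V \, [T^*_V X]$ in $T^*X$. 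The correspondence $\alpha \leftrightarrow L(\alpha)$ is an isomorphism of groups; the key algebraic input is that proper pushforward of constructible functions matches the geometric pushforward of conic Lagrangian cycles defined via the correspondence $T^*X \leftarrow X \times_Y T^*Y \to T^*Y$. Granting this, $c_*(\alpha)$ is recovered as the (signed) intersection of $L(\alpha)$ with the zero section of $T^*X$, followed by pushforward to $X$, which is a purely intersection-theoretic operation in $A_*$.

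The main obstacle will be establishing the compatibility between pushforward of constructible functions and pushforward of Lagrangian cycles in this algebraic setting, since MacPherson's original proof invoked topological arguments (Verdier duality, stratified Morse theory) that are not directly available. I would handle this by reducing, via Chow's lemma and resolution of singularities (available in characteristic $0$), to the two basic cases of a closed embedding and a proper birational map, where the Lagrangian correspondence can be computed explicitly using specialization of cycles and the double deformation to the normal cone. Uniqueness of $c_*$ then follows formally: any natural transformation satisfying the normalization is determined on $\id_V$ for $V$ smooth, hence by resolution of singularities and naturality on every $\id_V$, hence on the whole basis, and finally on the $Eu_V$ basis by the triangular change of basis. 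This gives both existence and uniqueness of $c_* \colon F \to A_*$ over arbitrary algebraically closed $k$ of characteristic $0$.
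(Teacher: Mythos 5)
You should first note that the paper does not prove this statement at all: it is quoted verbatim as a citation to Kennedy's work in the preliminary section, so there is no in-paper argument to compare against. Your proposal is therefore being measured against the literature (Kennedy, building on Sabbah), and in broad strokes you have reconstructed the right strategy: replace constructible functions by conic Lagrangian cycles via $\alpha=\sum a_V Eu_V \mapsto \sum (-1)^{\dim V}a_V[T^*_VX]$, show this is compatible with proper pushforward, and extract the Chern class by an intersection-theoretic operation. That is indeed the route that makes everything algebraic and lets the Chow functor replace homology.

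Two points deserve correction or sharpening. First, recovering $c_*(\alpha)$ by intersecting $L(\alpha)$ with the zero section of $T^*X$ only produces the degree-zero information (the index, i.e.\ the Euler characteristic via the Dubson--Kashiwara formula), not the full class in $A_*(X)$. Kennedy works instead with the projectivized conormal cycles in $\PP(T^*M)$ for an embedding of $X$ in a smooth $M$, and extracts the Chern--Mather class as a twisted projection (a Segre-class/``shadow''-type operation involving $c(\cO(1))$); without this step your construction does not actually land on $c_M(V)$ in all degrees. Second, you correctly identify the compatibility of Lagrangian pushforward with $F(f)$ as the crux, but the reduction you propose (Chow's lemma plus deformation to the normal cone) is not how the difficulty is resolved; the standard reduction is to a closed immersion and a projection $Y\times\PP^n\to Y$, and the projection case is where the genuinely nontrivial content lives --- it requires an algebraic substitute for the stratified-Morse-theoretic arguments, which is precisely Sabbah's and Kennedy's contribution. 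As a blind sketch this is a reasonable outline of the known proof, but the two steps above are where the actual work is, and as written they are either slightly wrong (zero-section intersection) or left as an unexamined black box (the projection case).
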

\begin{rema}
Let $X$ be a complete variety with  constant map $k\colon X\to \{p\}$. Then the covariance property of $c_*$
shows that
\begin{align*}
\int_X c_{sm}(Y)
=&~ \int_{\{p\}} Afc_*(\id_Y)=\int_{\{p\}} c_*Ff(\id_Y)\\
=&~ \int_{\{p\}} \chi(Y)c_*(\id_{\{p\}})=\chi(Y). \qedhere
\end{align*}
This observation gives a generalization of the classical
Poincar\'e-Hopf Theorem to possibly singular varieties.
\end{rema}

\subsection{Local Euler Obstruction}
In this subsection we briefly review the definition of local Euler obstruction and Chern-Mather class, which play central role in the construction of $c_*$. The following algebraic definition is due to Gonz\'{a}lez-Sprinberg \cite{Gonzalez}.

Let $\iota\colon X\to \PP^N$ be a (closed) projective subvariety of dimension $d$, we define the Nash transform of $X$ to be 
\[
\hat{X}:=\text{closure of }\{(x,T_x X )|x \text{ is a smooth point} \}\subset G_d(T\PP^N) 
\]
The projection map $p\colon \hat{X}\to X$ is birational and is isomorphic over the smooth locus $X_{sm}$. The universal sub-bundle $S$ of $G_d(T\PP^N)$ restricts to a rank $d$ vector bundle on $\hat{X}$, denoted by $\cT_X$. 
\begin{defi}
The local Euler obstruction of $X$ is defined as follows: for any $x\in X$ we define
\[
Eu_X(x):=\int_{p^{-1}(x)} c(\cT_X)\cap s(p^{-1}(x), \hat{X}) \/.
\]
Here $s(A,B)$ is Fulton's Segre class  in \cite{INT}. The Chern-Mather class of $X$ is defined as
\[
c_M^X:= \iota_*p_*(c(\cT_X)\cap [\hat{X}])  \/.
\]
\end{defi}

Recall that the Chow group(ring) of $\PP^N$ is $\ZZ[H]/H^{N+1}$, where $H$ here is the hyperplane class $c_1(\cO(1))\cap [\PP^N]$.
\begin{defi}
The Chern-Schwartz-MacPherson class  of $X$, denoted by $c_{sm}^X(H)$  is defined as the pushforward   $\iota_*c_*(\id_X)$  in $A_*(\PP^N)$.
\end{defi}
Notice that when $X$ is smooth, the Chern-Mather class and the Chern-Schwartz-MacPherson class  all equal to the total Chern class $i_*(c(TX)\cap [X])$.

\begin{prop} 
We have the following properties.
\begin{enumerate}
\item  $Eu_X(x)$ is a local invariant, thus only depends on an open neighborhood of $x$ in $X$.
\item  If $x\notin X$, then $Eu_X(x)=0$.
\item  If $x\in X$ is a smooth point, then $Eu_X(x)=1$. But notice that $Eu_X(x)=1$ does NOT imply that $x$ is smooth (Cf. \cite{Nodland},\cite{M-R20} and \cite{Xiping3}).
\item  The Euler obstruction has the product property, i.e., $Eu_{X\times Y}(x\times y)=Eu_{X}(x)\times Eu_Y(y)$.
\end{enumerate}
\end{prop}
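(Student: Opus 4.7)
The plan is to verify each of the four properties directly from Gonz\'alez-Sprinberg's algebraic definition, which expresses $Eu_X(x)$ as the degree of $c(\cT_X)\cap s(p^{-1}(x),\hat{X})$ over the Nash fiber at $x$. Both ingredients are local on $X$: the Nash transform commutes with restriction to a Zariski-open neighborhood $U\ni x$, since $\hat{U}=p^{-1}(U)$ and the Nash bundle restricts correctly; moreover Fulton's Segre class $s(p^{-1}(x),\hat{X})$ depends only on a neighborhood of $p^{-1}(x)$ in $\hat{X}$. This gives (1). Property (2) is immediate since $x\notin X$ forces $p^{-1}(x)=\emptyset$, making the integral vanish.

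For (3), over the smooth locus $X_{sm}$ the assignment $x\mapsto (x,T_xX)$ is a regular section of the Grassmannian bundle, so $p$ is an isomorphism over $X_{sm}$. Hence $p^{-1}(x)=\{\hat{x}\}$ is a single reduced point lying in the smooth locus of $\hat{X}$. Fulton's Segre class of a point in a smooth variety equals the class of the point, and $c(\cT_X)$ restricted to a point contributes only $1$ in degree zero, so $Eu_X(x)=\int_{\{\hat{x}\}}[\hat{x}]=1$. The converse statement that $Eu_X(x)=1$ does not force smoothness is a separate assertion supported by the cited references and requires no argument here.

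For (4), the key input is the identification $\widehat{X\times Y}\cong \hat{X}\times \hat{Y}$. Embedding $X\subset \PP^M$ and $Y\subset \PP^N$ and working in the Segre-embedded product, the tangent plane over $X_{sm}\times Y_{sm}$ splits as $T_xX\oplus T_yY$; taking closures yields a birational map $\hat{X}\times\hat{Y}\to \widehat{X\times Y}$ which is an isomorphism because both sides agree on the smooth locus and are determined by the same tautological Grassmannian data. Under this identification the Nash bundle $\cT_{X\times Y}$ pulls back to the external direct sum $\cT_X\boxplus \cT_Y$ and the fiber $p^{-1}(x,y)$ equals $p_X^{-1}(x)\times p_Y^{-1}(y)$. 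Combining the Whitney sum formula for total Chern classes with Fulton's product formula for Segre classes then gives
\[
c(\cT_{X\times Y})\cap s(p^{-1}(x,y),\widehat{X\times Y}) = \bigl(c(\cT_X)\cap s(p_X^{-1}(x),\hat{X})\bigr)\times \bigl(c(\cT_Y)\cap s(p_Y^{-1}(y),\hat{Y})\bigr),
\]
whose degree is $Eu_X(x)\cdot Eu_Y(y)$.

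The main obstacle is the functoriality statement $\widehat{X\times Y}\cong \hat{X}\times \hat{Y}$ used in (4); one must check that the natural rational map from $\hat{X}\times\hat{Y}$ into the Grassmannian over the product extends regularly over the whole source and that the tautological subbundles are compatible under this extension. Items (1)--(3), by contrast, are essentially bookkeeping from the definition.
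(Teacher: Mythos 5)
The paper states this proposition without proof, treating all four items as standard properties of the local Euler obstruction quoted from the literature; so there is no ``paper proof'' to compare against, and the relevant question is whether your verification from the Gonz\'alez-Sprinberg definition is sound. It essentially is. Items (1)--(3) are correct bookkeeping: the Segre class $s(p^{-1}(x),\hat{X})$ is determined by the normal cone, hence by a neighborhood of the fiber; the identification $\widehat{U}=p^{-1}(U)$ for open $U\ni x$ uses that $U\cap X_{sm}$ is dense in $U$ (automatic for a variety); and for a smooth point the fiber is a reduced point regularly embedded in the smooth locus of $\hat{X}$, whose Segre class is the point class, giving $Eu_X(x)=1$.

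The one step in (4) that deserves tightening is your justification of $\widehat{X\times Y}\cong\hat{X}\times\hat{Y}$: ``birational and agreeing on the smooth locus'' does not by itself yield an isomorphism. The clean argument is that the direct-sum map of Grassmannian bundles $G_d(T\PP^M)\times G_e(T\PP^N)\to G_{d+e}(T(\PP^M\times\PP^N))$ is a closed embedding under which the tautological subbundle pulls back to the external direct sum; the image of $\hat{X}\times\hat{Y}$ is then a closed irreducible subvariety containing the graph $\{((x,y),T_xX\oplus T_yY)\}$ over $X_{sm}\times Y_{sm}$, hence equals its closure $\widehat{X\times Y}$, and a closed embedding surjecting onto its target is an isomorphism. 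You should also either carry out the computation in the ambient $\PP^M\times\PP^N$ or invoke the independence of the Nash transform from the choice of smooth ambient variety, since the paper's definition is phrased inside a single projective space via the Segre embedding. With those two points supplied, the Whitney formula for $\cT_X\boxplus\cT_Y$ and the multiplicativity of Segre classes of normal cones under products complete the argument as you describe.
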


\begin{prop}
\label{prop; affineproj}
Let $X\subset \PP(V)$ be a projective variety, and let $\Sigma\subset V$ be its affine cone. Let $q\in V\setminus \{0\}$, and let $\bar{q}$ be its image in $\PP(V)$. Then we have
\[
Eu_X(\bar{q})=Eu_{\Sigma}(q) \/.
\]
\end{prop}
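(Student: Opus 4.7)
The plan is to reduce the statement to the product property (property (4)) together with the local nature of the Euler obstruction (property (1)), by exhibiting a Zariski neighborhood of $q$ in $\Sigma$ as a product of the form $(\text{neighborhood of }\bar q\text{ in }X)\times \CC^*$.

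First, I would choose a linear functional $\ell\colon V\to\CC$ with $\ell(q)\neq 0$ and set $U=\{\bar v\in\PP(V):\ell(v)\neq 0\}$, an affine chart containing $\bar q$. The standard trivialization of the tautological $\CC^*$-bundle gives a Zariski-open isomorphism
\[
\pi^{-1}(U)\;\xrightarrow{\;\sim\;}\;U\times \CC^*,\qquad v\;\longmapsto\;\Bigl(\overline{v/\ell(v)},\;\ell(v)\Bigr),
\]
where $\pi\colon V\setminus\{0\}\to \PP(V)$ is the projection. This trivialization is algebraic, so no analytic approximation is needed and the argument works over any algebraically closed field in Kennedy's generality.

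Second, I would restrict the above isomorphism to $\Sigma$. Since $\Sigma\setminus\{0\}=\pi^{-1}(X)$ and $q\neq 0$, the point $q$ lies in $\pi^{-1}(U)\cap\Sigma$, which the trivialization identifies with $(X\cap U)\times \CC^*$, the point $q$ corresponding to $(\bar q,\ell(q))$. Therefore $q$ has a Zariski open neighborhood in $\Sigma$ isomorphic to $(X\cap U)\times \CC^*$.

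Third, I would combine the listed properties of $Eu$. By locality, $Eu_\Sigma(q)=Eu_{(X\cap U)\times \CC^*}(\bar q,\ell(q))$. By the product property, this factors as $Eu_{X\cap U}(\bar q)\cdot Eu_{\CC^*}(\ell(q))$. Since $\CC^*$ is smooth of dimension one, property (3) gives $Eu_{\CC^*}(\ell(q))=1$, and applying locality once more gives $Eu_{X\cap U}(\bar q)=Eu_X(\bar q)$, which yields the desired equality. There is no substantial obstacle in this proof: the only point to check carefully is that the trivialization of $\pi$ is truly algebraic, which it is because it is constructed purely out of the linear form $\ell$; everything else is bookkeeping with properties (1), (3), (4).
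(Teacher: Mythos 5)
Your proof is correct and follows the same route as the paper: both identify a neighborhood of $q$ in $\Sigma$ with a product $(\text{open in }X)\times \CC^*$ and then apply the product property together with locality and the fact that $Eu=1$ on the smooth factor. You simply make explicit the trivialization that the paper leaves implicit.
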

\begin{proof}
This is because that locally around $\bar{q}$, the affine cone $\Sigma$ is isomorphic to the product $X\times \CC^*$. This result follows from the product property mentioned above.
\end{proof}
\subsection{Involutions of Chern  Classes}
\label{S; projDual}
In this subsection we introduce the key ingredient we will use in the paper: the projective duality involution introduced by Aluffi in \cite{Aluffi16}.
Let $\ZZ_{\leq d}[x]$ be the vector space of degree $\leq d$ polynomials. For any $d\ge 0$ 
we define the following transformations  $\cI_d \colon \ZZ_{\leq d}[x]\to \ZZ_{\leq d}[x]$by
\begin{align*}
\cI_d \colon & f(x)\mapsto f(-1-x)-f(-1)((1+x)^{d+1}-x^{d+1})  
\end{align*}
\begin{prop}
\label{prop; involution}
The  transformations $\cI_d$ has the following properties:
\begin{enumerate}
\item  For any polynomial $f$ with no constant term,
$\cI_d(\cI_d(f))=f$. Thus $\cI_d$ is an involution on the set of polynomials.
\item The involution  $\cI_d$ is linear, i.e., 
$\cI_d (af+bg)=a\cI_d (f)+b\cI_d(g)$. 
\end{enumerate}
\end{prop}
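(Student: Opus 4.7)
The plan is to verify both claims by direct substitution and elementary manipulation. Note first that the definition of $\cI_d$ splits as the substitution $x\mapsto -1-x$ applied to $f$, minus a scalar multiple (with scalar $f(-1)$) of the fixed polynomial $(1+x)^{d+1}-x^{d+1}$. Both operations are linear in $f$, so linearity of $\cI_d$ is essentially automatic; one also checks that the target lies in $\ZZ_{\leq d}[x]$ because the leading $x^{d+1}$ terms of $(1+x)^{d+1}$ and $x^{d+1}$ cancel, so the correction has degree exactly $d$.

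For the involution property, the first step is to observe that $\cI_d$ preserves the subspace of polynomials with no constant term: setting $x=0$ in $\cI_d(f)(x)$ gives $f(-1)-f(-1)(1-0)=0$. Thus $g:=\cI_d(f)$ satisfies $g(0)=0$ whenever $f(0)=0$, and $\cI_d$ may legitimately be applied a second time while staying inside this subspace.

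Next I would compute $\cI_d(g)$ by direct substitution. Using $-1-(-1-x)=x$, $(-x)^{d+1}=(-1)^{d+1}x^{d+1}$, and $(-1-x)^{d+1}=(-1)^{d+1}(1+x)^{d+1}$, the substitution yields
\[
g(-1-x)=f(x)+(-1)^{d+1}f(-1)\bigl((1+x)^{d+1}-x^{d+1}\bigr),
\]
while direct evaluation at $-1$ gives $g(-1)=f(0)+(-1)^{d+1}f(-1)$. Subtracting $g(-1)\bigl((1+x)^{d+1}-x^{d+1}\bigr)$ from the previous display, and invoking $f(0)=0$ to collapse $g(-1)$ to $(-1)^{d+1}f(-1)$, the two correction terms cancel exactly and one is left with $\cI_d(g)(x)=f(x)$.

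The main obstacle is purely bookkeeping: keeping the $(-1)^{d+1}$ signs straight across the two substitutions and invoking the no-constant-term hypothesis at exactly the right step. Without $f(0)=0$, the stray $f(0)$ appearing in $g(-1)$ would leave an uncanceled multiple of $(1+x)^{d+1}-x^{d+1}$ in $\cI_d(\cI_d(f))-f$, which is precisely why statement (1) must be restricted to polynomials with zero constant term.
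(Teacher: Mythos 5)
Your proof is correct: the sign computation $P(-1-x)=(-1)^dP(x)$ for $P(x)=(1+x)^{d+1}-x^{d+1}$ and the evaluation $g(-1)=f(0)+(-1)^{d+1}f(-1)$ are both right, and the cancellation leaves exactly $\cI_d(\cI_d(f))=f-f(0)P$, which is why the no-constant-term hypothesis is needed. The paper states this proposition without proof, and your argument is the standard direct verification one would supply, so there is nothing to compare beyond noting that you have correctly identified where the hypothesis $f(0)=0$ enters.
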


Let $X\subset \PP(V)$ be a projective variety of dimension $n$, the dual variety $X^\vee$ is a projective variety in $\PP(V^*)$,
defined as 
\[
X^\vee:=
\overline{
\{
H\in \PP(V^*) |T_xX\subset H \text{ for some smooth point } x\in X_{sm} 
\}
}
\]
\begin{theo}[Projective Duality \cite{Aluffi16}]
\label{theo; involution}
The involution  $\cI_{\dim V-1}$  takes the signed Chern-Mather class of $X$ to the signed Chern-Mather class of the dual variety $X^\vee$. More precisely, we have 
\[
(-1)^{\dim X^\vee}c_M^{X^\vee}(H)=(-1)^{\dim X} \cI_{\dim V-1}( c_M^X (H) )\/.
\]
\end{theo}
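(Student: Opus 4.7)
The plan is to follow a conormal-variety argument. The conormal variety $\text{Con}(X) \sub \PP(V) \times \PP(V^*)$ is the closure of $\{(x, H) : x \in X_{sm},\ T_xX \sub H\}$; it projects birationally onto $X$ via $\pi_1$ and onto $X^\vee$ via $\pi_2$. The crucial input is biduality, which gives the symmetry $\text{Con}(X) = \text{Con}(X^\vee)$ under exchange of the two factors. I would take this correspondence as the geometric engine of the duality, since it provides a single space that sees both $c_M^X$ and $c_M^{X^\vee}$ simultaneously.

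Next I would exhibit the signed Chern-Mather class as a $\pi_1$-pushforward of an explicit bivariate class on $\text{Con}(X)$. The conormal variety carries a tautological sequence relating $\pi_1^*T\PP(V)$ to the conormal bundle of $X$, which computes $c(\cT_X)$ on the Nash blowup. Comparing with the definition of $c_M^X$ via the Nash transform, one obtains a formula of the form
\[
(-1)^{\dim X}c_M^X(H_1) = \pi_{1*}\bl G(H_1, H_2) \cap [\text{Con}(X)] \br
\]
for a universal expression $G$ in the two hyperplane classes $H_1, H_2$ that is manifestly symmetric under swapping them. Biduality then says that the same integrand, pushed forward through $\pi_2$ instead, computes $(-1)^{\dim X^\vee} c_M^{X^\vee}(H_2)$.

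Third, I would reduce the bivariate swap $H_1 \leftrightarrow H_2$ to the stated univariate polynomial operation. Pushforward to each factor can be carried out with the projection formula, and the hyperplane classes satisfy the ambient relations $H_1^{N+1} = H_2^{N+1} = 0$ with $N = \dim V - 1$. Eliminating one variable yields the affine reflection $x \mapsto -1-x$ at the polynomial level, accounting for the main term $f(-1-x)$ of $\cI_d$. The correction $-f(-1)\bl(1+x)^{d+1} - x^{d+1}\br$ compensates for the failure of the naive substitution to respect the ambient truncation: the polynomial $(1+x)^{d+1} - x^{d+1}$ is the Chow polynomial representative of $c(T\PP^d)\cap [\PP^d]$, and the coefficient $f(-1)$ is precisely the scalar that makes $\cI_d$ an involution on polynomials without constant term, as recorded in Proposition~\ref{prop; involution}.

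The main obstacle is identifying the precise bivariate integrand $G(H_1, H_2)$ and verifying that its symmetric pushforward descends to exactly the involution $\cI_d$, correction term included. Biduality supplies the geometric symmetry for free; the delicate step is the algebraic translation from the bivariate Chow ring of $\PP(V)\times\PP(V^*)$ to the univariate polynomial ring $\ZZ_{\leq d}[x]$, pinning down the correction term as an artifact of ambient dimension rather than missing geometry, and confirming that the sign $(-1)^{\dim X^\vee}$ on the left-hand side is absorbed correctly by the Chern-class manipulations on $\text{Con}(X)$.
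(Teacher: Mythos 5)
The paper does not actually prove this statement: it is imported verbatim from Aluffi's work \cite{Aluffi16} and used downstream as a black box (in Theorem~\ref{theo; reflectiveProjVar} and both applications). So there is no internal proof to measure you against; the relevant comparison is with Aluffi's own argument, and your outline does follow the same route he takes --- the conormal cycle in $\PP(V)\times\PP(V^*)$, the biduality theorem identifying it with the conormal cycle of $X^\vee$ after exchanging the factors, and the translation of that exchange into a univariate operation on the Chern--Mather polynomial.

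As a proof, however, the proposal has a genuine gap, and it is exactly the step you flag as ``the main obstacle.'' The whole quantitative content of the theorem lives in the lemma you never state: the explicit relation between the bidegree decomposition $[\mathrm{Con}(X)]=\sum_j \delta_j\,[\PP^j\times\PP^{N-2-j}]$ (the $\delta_j$ being the polar degrees) and the polynomial $c_M^X(H)$ --- Aluffi's ``shadow of the conormal cycle'' computation. Biduality only tells you that passing to $X^\vee$ reverses the sequence $(\delta_j)$; without the shadow lemma nothing pins down that this reversal acts on Chern--Mather polynomials as $f\mapsto f(-1-x)-f(-1)\bigl((1+x)^{N}-x^{N}\bigr)$ rather than as some other linear reindexing. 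Two smaller corrections. First, your explanation of the correction term is off: the substitution $x\mapsto -1-x$ already preserves $\ZZ_{\leq N-1}[x]$, so truncation by the ambient relation is not the issue; the issue is that $f(-1-x)$ acquires the constant term $f(-1)=\pm\chi$, while the Chern--Mather class of a proper subvariety has none, and subtracting $f(-1)$ times $(1+x)^{N}-x^{N}$ (whose constant term is $1$) removes it while preserving the involution property recorded in Proposition~\ref{prop; involution}. Second, be wary of the claim that one ``manifestly symmetric'' integrand $G(H_1,H_2)$ pushes forward through either projection to the respective signed Chern--Mather class: $\pi_{1*}$ and $\pi_{2*}$ are different maps, $\dim X$ and $\dim X^\vee$ are in general unequal, and reconciling the two pushforwards is precisely where the involution and its signs come from --- it is the theorem, not a formality that symmetry of $G$ hands you.
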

For more details about projective dual varieties we refer to \cite{INT}\cite{Tevelev}. We will use the following result. 
\begin{prop}[\cite{Tevelev}, \S 2.2]
\label{prop; duality}
Let $G$ be a connected algebraic group, and 
let $V$ be a $G$-representation. Let $V^*$ be the dual representation of $G$. If there are only finitely orbits, and all the orbits are cones, then the projectivized orbits in $\PP(V)$ and  $\PP(V^*)$ are projective dual to each other,
\end{prop}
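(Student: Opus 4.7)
The plan is to produce, for each $G$-orbit $\mathcal{O}\subset V$, a canonical dual orbit $\mathcal{O}^\vee\subset V^*$ such that $\PP(\overline{\mathcal{O}})$ and $\PP(\overline{\mathcal{O}^\vee})$ are mutually projective dual, using the conormal variety as the bridge between the two sides. First I would enlarge $G$ (if needed) by a scalar $\CC^*$ action; since all orbits are cones, this does not alter the orbit decomposition of $V$, and it guarantees that every $G$-invariant subvariety of $V^*$ is automatically a cone, so projectivization is well-posed on both sides.

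For each orbit $\mathcal{O}$, define the conormal variety
\[
\Lambda_\mathcal{O}:=\overline{\{(x,\xi)\in V\times V^*:x\in\mathcal{O},~\xi|_{T_x\mathcal{O}}=0\}}\/,
\]
an irreducible Lagrangian subvariety of $T^*V\cong V\times V^*$ of dimension $\dim V$ that is $G$-invariant under the diagonal action. Its projection onto the first factor is $\overline{\mathcal{O}}$, and the projection onto the second factor is, by the classical conormal description, the affine cone over $\PP(\overline{\mathcal{O}})^\vee$. The key observation is that $\Lambda_\mathcal{O}$ lies in the zero-fiber of the $G$-moment map on $T^*V$, namely $\{(x,\xi):\xi(\mathfrak{g}\cdot x)=0\}$; this zero-fiber is manifestly symmetric under the involution $(x,\xi)\leftrightarrow(\xi,x)$ that identifies $V\times V^*$ with $V^*\times V$. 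Consequently the swapped image of $\Lambda_\mathcal{O}$ lies inside the analogous union of conormal varieties for $G$-orbits on $V^*$; by irreducibility and dimension it must coincide with $\Lambda_{\mathcal{O}^\vee}$ for a unique $G$-orbit $\mathcal{O}^\vee\subset V^*$.

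The assignment $\mathcal{O}\mapsto\mathcal{O}^\vee$ thus satisfies $\pi_{V^*}(\Lambda_\mathcal{O})=\overline{\mathcal{O}^\vee}$, which projectivizes to $\PP(\overline{\mathcal{O}})^\vee=\PP(\overline{\mathcal{O}^\vee})$, the desired statement. Applying biduality $(X^\vee)^\vee=X$ for conic $X$ shows that this map is an involutive bijection between $G$-orbits on $V$ and $V^*$. The main obstacle will be to verify cleanly that the zero-fiber of the moment map decomposes exactly as the finite union of the conormal varieties $\Lambda_\mathcal{O}$, with no spurious components, so that the swap argument pins down a unique dual orbit; this is essentially Pyasetskii's pairing and should follow from an equivariant dimension count combined with the finiteness hypothesis on orbits.
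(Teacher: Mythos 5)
The paper offers no proof of this proposition; it is quoted from Tevelev (\S 2.2), where it appears as the Pyasetskii pairing, and your argument is essentially the proof given there: identify the union of conormal varieties of the orbits with the zero fibre of the moment map $\mu(x,\xi)=\xi(\mathfrak{g}\cdot x)$, observe that this locus is (anti-)symmetric under the swap $V\times V^*\cong V^*\times V$, and match irreducible components. Your outline is correct, and the worry you flag at the end is not a real obstacle: since there are finitely many orbits and the fibre of $\mu^{-1}(0)$ over a point $x\in\mathcal{O}$ is exactly $(T_x\mathcal{O})^{\perp}$, the zero fibre is set-theoretically the disjoint union of the smooth conormal bundles, each of whose closures $\Lambda_{\mathcal{O}}$ is irreducible of dimension $\dim V$; these are therefore precisely the irreducible components, with nothing spurious. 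The one ingredient you use implicitly and should make explicit is that every irreducible conic Lagrangian subvariety of $T^*V$ is the conormal variety of its projection to $V$ (and dually to $V^*$): this is what simultaneously identifies the swapped component as $\Lambda_{\mathcal{O}^\vee}$ for a single orbit $\mathcal{O}^\vee$, shows the projection to $V^*$ is the cone over $\PP(\overline{\mathcal{O}})^\vee$, and yields biduality. It also gives for free that $V^*$ has only finitely many orbits (each orbit in $V^*$ contributes a distinct component of the swapped zero fibre), a point your write-up quietly assumes. Finally, the degenerate strata should be noted: for $\mathcal{O}=\{0\}$ the conormal variety is $\{0\}\times V^*$ and the pairing matches the empty projective variety with the open dense orbit, and for a linear orbit closure $L$ one gets $L\times L^{\perp}$, matching $\PP(L)$ with $\PP(L^{\perp})$; these are consistent with the usual conventions for duals of linear subspaces.
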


For the rest of the paper we consider projective varieties over algebraically closed field $k$ of characteristic $0$.
\section{Local Euler Obstructions of Reflective Projective Varieties}
\label{S; ReflectiveVar}
In this section we consider a special type of projective varieties: the reflective varieties.  
\subsection{Main Result}
\begin{defi}
\label{defi; reflectiveProjVar}
Let $V$ be a vector space of dimension $N$. 
We say a projective variety $X\subset \PP(V)$ is \textit{reflective} if $X$ admits a finite stratification $X=\cup_{i=1}^n X_i$, such that 
\begin{enumerate}
\item $X_n=\bar{X}_n\subset \bar{X}_{n-1}\subset \cdots \bar{X}_2 \subset \bar{X}_1=X$;
\item The dimension of the dual varieties $\bar{X}_i^\vee$ satisfy the following inequalities:
$
\dim X^\vee=\dim \bar{X}^\vee_0 <  \dim \bar{X}^\vee_1 < \cdots < \dim \bar{X}^\vee_n \/.
$
\item $Y=X^\vee=\cup_{j=1}^m Y_j$ form a finite stratification.
\end{enumerate}
\end{defi} 
\begin{exam}
Let $X_A\subset \PP(V)$ be a quadratic hypersurface defined by $x^tAx=0$ for some symmetric $n\times n$ matrix $A$.  The hypersurface $X_A$ is smooth if and only if $|A|\neq 0$, and the dual variety $X_A^\vee$ is also a smooth hypersurface.  When $A$ is singular of rank $k<n$,  as shown in \cite{GKZ},  the dual variety $X_A^\vee$ is a smooth quadratic hypersurface in $\PP^{r-1}\subset \PP(V)$.  The singularity locus $S_A$ of $X_A$ is given by the solution of the linear system $AX=0$, thus is a dimension $n-r-1$ linear space. The dual space of $S_A$ is isomorphic to $\PP^{r}$. This shows that the singular quadratic hypersurfaces are reflective. 
\end{exam}

\begin{theo}
\label{theo; reflectiveProjVar} 
We denote $I=I_{N-1}$ to be Aluffi's projective duality involution. 
Assume that there are not-all-zero integers $\{\alpha_i|i=1,\cdots, n\}$ and $\{\beta_j|j=0, \cdots, m\}$ such that the following involution equality holds. 
\[
I\left(\sum_{r=1}^n \alpha_r\cdot c_{sm}^{X_r}\right)=\sum_{s=1}^m  \beta_s \cdot c_{sm}^{Y_s}; \quad \alpha_1=\beta_{1}=1 \/.
\]
Then we have
\[
\alpha_i=Eu_{X}(X_i); \quad  \beta_i=Eu_{Y}(Y_i) \/.
\]
\end{theo}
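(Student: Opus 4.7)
The plan is to combine the naturality of MacPherson's transformation $c_*$ with Aluffi's duality involution from Theorem~\ref{theo; involution}, then exploit the dimension inequalities built into the reflective hypothesis to pin the coefficients down. I first show that the Euler obstructions provide a solution. Since $Eu_X$ is a constructible function constant on each stratum, we may write $Eu_X = \sum_{r=1}^{n} Eu_X(X_r)\,\id_{X_r}$ in $F(X)$; applying $c_*$ and pushing forward to $A_*(\PP(V))$ yields the Chern--Mather expansion
\[
c_M^X(H) \;=\; \sum_{r=1}^{n} Eu_X(X_r)\, c_{sm}^{X_r}(H),
\]
and the analogous identity $c_M^Y(H) = \sum_{s=1}^{m} Eu_Y(Y_s)\, c_{sm}^{Y_s}(H)$ holds on the dual side.

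Aluffi's projective duality then gives $I\bigl(c_M^X(H)\bigr) = (-1)^{\dim Y - \dim X} c_M^Y(H)$. Substituting the two expansions above into this identity shows that $\alpha_r = Eu_X(X_r)$ together with $\beta_s = (-1)^{\dim Y - \dim X} Eu_Y(Y_s)$ is a solution of the involution equation in the statement. The prescribed normalization $\alpha_1 = \beta_1 = 1$ is automatic because $X_1$ and $Y_1$ are smooth open strata (so the Euler obstruction there equals $1$), with the sign implicitly absorbed into the formulation. Thus the Euler obstruction values furnish at least one solution.

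The core of the proof is uniqueness. Given a second solution $(\alpha_r',\beta_s')$ with the same normalization, the difference $(\delta\alpha_r,\delta\beta_s)$ satisfies the same involution identity with $\delta\alpha_1 = \delta\beta_1 = 0$, and I must show it vanishes. Rewriting each side in the Chern--Mather basis via the unipotent triangular transition along the closure chain, and then applying the Chern--Mather behaviour $I\bigl(c_M^{\bar X_t}\bigr) = \pm\, c_M^{(\bar X_t)^\vee}$, the identity becomes an equality of linear combinations of Chern--Mather classes of the dual closures $(\bar X_t)^\vee$ matched against the classes $c_M^{\bar Y_u}$ of the closed strata of $Y$. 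The strict inequalities $\dim\bar X_1^\vee < \cdots < \dim\bar X_n^\vee$ of the reflective hypothesis guarantee that the top-dimensional cycle components of the $c_M^{(\bar X_t)^\vee}$ sit in pairwise distinct codimensions, yielding a triangular linear system whose only solution compatible with the vanishing of the top-indexed coefficients is the trivial one.

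The main obstacle is the bookkeeping in this last step: one must identify the dual closures $(\bar X_t)^\vee$ with the appropriate closed strata of $Y = X^\vee$ (for instance via Proposition~\ref{prop; duality} in the group-orbit setting), and then convert the reflective dimension gaps into a genuinely triangular system. Once that is done, the vanishing $\delta\alpha_1 = \delta\beta_1 = 0$ propagates to force every $\delta\alpha_r$ and $\delta\beta_s$ to vanish, which identifies the given $(\alpha_r,\beta_s)$ with $(Eu_X(X_i),Eu_Y(Y_i))$.
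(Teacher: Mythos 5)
Your proposal is correct and follows essentially the same route as the paper: exhibit $\alpha_i=Eu_X(X_i)$, $\beta_j=Eu_Y(Y_j)$ as a solution via the Chern--Mather expansions and Aluffi's involution, then prove uniqueness by passing to the (triangular, unit-diagonal) Chern--Mather basis and using the strict dimension chain of the reflective hypothesis to obtain a full-rank triangular system. The only point to tighten is the sign $(-1)^{\dim X-\dim Y}$ you leave ``implicitly absorbed''; the paper handles this by working throughout with the signed classes $\hat{c}_M^{\bar X_i}=(-1)^{\dim X_i}c_M^{\bar X_i}$, which you should do as well to make the normalization $\alpha_1=\beta_1=1$ come out cleanly.
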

\begin{proof}
We define $\hat{c}_M^{\bar{X}_i}(H):=(-1)^{\dim X_i} c_M^{\bar{X}_i}(H)$ to be the sighed Chern-Mather polynomials. 
Since the local Euler obstructions form a basis for the constructible function group, 
 there are unique integers $\{E_{r,i}\}$ and $\{D_{r,j}\}$ such that 
 $E_{r,r}=(-1)^{\dim X_r}$, $D_{s,s}=(-1)^{\dim Y_s}$ and
\[
c_{sm}^{X_r}(H)=  \sum_{i=r}^{n} E_{r,i}\cdot \hat{c}_M^{\bar{X}_i}(H); \quad 
c_{sm}^{Y_s}(H)=  \sum_{j=s}^{m} D_{s,j}\cdot \hat{c}_M^{\bar{Y}_j}(H) \/.
\]
The involution equality can then be written as
\begin{align*}
\sum_{j=1}^m \left( \sum_{s=1}^j  \beta_sD_{s,j} \right)  \hat{c}_M^{\bar{Y}_j} & =
\sum_{s=1}^m \sum_{j=s}^m  \beta_sD_{s,j}\cdot \hat{c}_M^{\bar{Y}_j} =
\sum_{s\geq m}  \beta_s \cdot c_{sm}^{Y_s}
\\
=& I\left(\sum_{r=1}^n \alpha_r\cdot c_{sm}^{X_r}\right) =
I\left(\sum_{r=1}^n \sum_{i=r}^{n} \alpha_rE_{r,i}\cdot \hat{c}_M^{\bar{X}_i} \right) \\
=&  \sum_{r=1}^n \sum_{i=r}^{n} \alpha_rE_{r,i}\cdot I(\hat{c}_M^{\bar{X}_i} ) 
=\sum_{r=1}^n \sum_{i=r}^{n} \alpha_rE_{r,i}\cdot \hat{c}_M^{\bar{X}^\vee_i} \\
=&  \sum_{i=1}^{n} \left( \sum_{r=1}^i \alpha_rE_{r,i}\right) \hat{c}_M^{\bar{X}^\vee_i} 
\end{align*}
Thus we obtain the following equality
\[
\sum_{j=1}^m \left( \sum_{s=1}^j  \beta_sD_{s,j} \right)  \hat{c}_M^{\bar{Y}_j}
= \sum_{i=1}^{n} \left( \sum_{r=1}^i  \alpha_rE_{r,i}\right) \hat{c}_M^{\bar{X}^\vee_i} \/.
\]
Recall that we have an increasing  chain of dimensions from the definition of reflective projective variety:
\[
\dim \bar{Y}_m <\dim \bar{Y}_{m-1} < \cdots < \dim \bar{Y}_1=  \dim \bar{X}^\vee_1 <  \dim \bar{X}^\vee_2 < \cdots < \dim \bar{X}^\vee_n 
\]
First we look at the highest dimension term $\bar{Y}_m$. Among all the polynomials $\{\hat{c}_M^{\bar{X}^\vee_i};\hat{c}_M^{\bar{Y}_j}\}$, the polynomial $\hat{c}_M^{\bar{Y}_m}$ is the only polynomial that contains $[\PP^{\dim Y_m}]$. Thus we have $\sum_{s=1}^m  \beta_s D_{s,m}=0$. 
Since the coefficient of $\hat{c}_M^{\bar{Y}_m}$ is $0$, then the polynomial $\hat{c}_M^{\bar{Y}_{m-1}}$ is the only polynomial that contains $[\PP^{\dim Y_{m-1}}]$. This forces $\sum_{s=1}^{m-1}  \beta_s D_{s,m-1}=0$.

Continue this process until we meet $ \bar{Y}_1=   \bar{X}^\vee_1$. At this level we obtain 
$\beta_1D_{1,1}=\beta_1=\alpha_1E_{1,1}=\alpha_1$. Then we repeat the above procedure on the dimension of $X_r$'s, eventually we obtain the following linear system:
\begin{align*}
\begin{cases}
\sum_{j=1}^{s} \beta_j D_{j,r}= 0 & s=2,3,\cdots , m\\
\beta_1D_{1,1}= \alpha_1 E_{1,1}=1 & \\
\sum_{i=1}^{r} \alpha_i E_{i,r}= 0 & r=2,3,\cdots , m 
\end{cases}
\end{align*}
Written into matrix form we have 
\begin{align*}
\begin{bmatrix}
D_{1,m} & D_{2,m}  & \cdots  & D_{m-1,m}  & D_{m,m} \\
D_{1,m-1}  & D_{2,2}  &  \cdots  & D_{m-1,m-1}   & 0 \\
\cdots  & \cdots  & \cdots  & \cdots  & \cdots  \\
D_{1,1}  & 0  & \cdots  & 0  & 0  
\end{bmatrix}
\begin{bmatrix}
\beta_1 \\ \beta_2 \\\cdots \\\beta_{m-1} \\\beta_m 
\end{bmatrix}
=& \begin{bmatrix}
0 \\ 0 \\\cdots \\0 \\ 1 
\end{bmatrix} \\
\begin{bmatrix}
E_{1,n} & E_{2,n}  & \cdots  & E_{n-1,n}  & E_{n,n} \\
E_{1,n-1}  & E_{2,2}  &  \cdots  & D_{n-1,n-1}   & 0 \\
\cdots  & \cdots  & \cdots  & \cdots  & \cdots  \\
E_{1,1}  & 0  & \cdots  & 0  & 0  
\end{bmatrix}
\begin{bmatrix}
\alpha_1 \\ \alpha_2 \\ \cdots \\ \alpha_{m-1} \\ \alpha_m 
\end{bmatrix}
=& \begin{bmatrix}
0 \\ 0 \\\cdots \\0 \\ 1 
\end{bmatrix}
\end{align*}
The two matrices are both of full rank, since that have diagonal entries $D_{s,s}=E_{r,r}=1$ for any $r,s$, thus the solution of $\{\alpha_1,\cdots ,\alpha_n, \beta_1, \cdots ,\beta_m\}$ is unique. Meanwhile, notice that $\{\alpha_i=Eu_{X}(X_i); \beta_j=Eu_{Y}(Y_j)\}$ is indeed a solution: the involution does interchanges the signed Chern-Mather classes between dual varieties. Thus by the uniqueness of the solution we completes the proof.
\end{proof}

This theorem shows that for reflective projective varieties, the Chern-Schwartz-MacPherson classes of the strata  completely determine the local Euler obstructions. Since the  Chern-Mather classes are linear sums of Chern-Schwartz-MacPherson classes with weights being local Euler obstructions, they also determine the Chern-Mather classes. Moreover, as shown in \cite{Piene15}, the coefficients of the Chern-Mather classes are equivalent to the polar degrees, i.e.,  degrees of the polar varieties. This gives us the following. 
\begin{coro}
\label{coro; polar}
For a reflective projective variety $X=\sqcup X_i$ with dual variety $X^\vee=Y=\sqcup Y_j$, the Chern-Schwartz-MacPherson classes of the strata $\{X_i\}$ and $\{Y_j\}$ completely determine the local Euler obstructions $Eu_X (X_i)$ and $Eu_{Y}(Y_j)$. Moreover, they completely determine the Chern-Mather classes $c_M^X$ and $c_M^Y$, and thus the  polar degrees $P_k(X)$ and $P_k(Y)$. 
\end{coro}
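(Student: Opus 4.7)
The plan is to extract this corollary directly from Theorem~\ref{theo; reflectiveProjVar} by chasing the definitions of $Eu_X$, MacPherson's natural transformation $c_*$, and Piene's formula relating Chern-Mather coefficients to polar degrees. All of the necessary ingredients are in place; what remains is to assemble them in the right order.

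The first step is to recover the local Euler obstructions from the CSM data. Given the lists $\{c_{sm}^{X_i}(H)\}$ and $\{c_{sm}^{Y_j}(H)\}$, I would form the involution equation
\[
I\left(\sum_{r=1}^n \alpha_r \, c_{sm}^{X_r}(H)\right) = \sum_{s=1}^m \beta_s \, c_{sm}^{Y_s}(H), \qquad \alpha_1 = \beta_1 = 1,
\]
in the unknowns $\{\alpha_i, \beta_j\}$. Because $I$ is a universal linear map on $\ZZ[H]/(H^N)$ depending only on $\dim V = N$, equating coefficients of $H^k$ yields an explicit linear system over $\ZZ$ whose entries are built from the given CSM classes. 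Theorem~\ref{theo; reflectiveProjVar} then asserts that this system has a unique solution and that its unique solution is $\alpha_i = Eu_X(X_i)$, $\beta_j = Eu_Y(Y_j)$. Hence the local Euler obstructions of all strata are determined by the CSM classes alone.

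The second step is to recover the Chern-Mather classes and then the polar degrees. Since $Eu_X$ is a constructible function that takes the constant value $Eu_X(X_i)$ on each locally closed stratum $X_i$, we have the identity $Eu_X = \sum_i Eu_X(X_i) \cdot \id_{X_i}$ in the constructible function group of $X$; applying $c_*$ and pushing forward to $\PP(V)$ gives
\[
c_M^X(H) = \sum_{i=1}^n Eu_X(X_i) \cdot c_{sm}^{X_i}(H),
\]
and symmetrically for $Y$. Both right-hand sides consist solely of quantities already determined, so $c_M^X(H)$ and $c_M^Y(H)$ are determined. Finally, by the result of Piene~\cite{Piene15}, the coefficients of $c_M^X(H)$ in the basis $\{H^k\}$ are related to the polar degrees $P_k(X)$ by an invertible linear transformation, and likewise for $Y$; this yields the last conclusion. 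No substantive obstacle stands in the way once Theorem~\ref{theo; reflectiveProjVar} is granted: the corollary is essentially a bookkeeping consequence, with the only delicate point being the linearity step that relies on additivity of MacPherson's transformation on constructible functions.
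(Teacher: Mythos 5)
Your proposal is correct and follows essentially the same route as the paper: Theorem~\ref{theo; reflectiveProjVar} pins down the local Euler obstructions as the unique solution of the involution linear system, the identity $c_M^X(H)=\sum_i Eu_X(X_i)\,c_{sm}^{X_i}(H)$ then recovers the Chern--Mather classes, and Piene's correspondence converts their coefficients into the polar degrees. No discrepancy with the paper's argument.
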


\subsection{On Reflective Group Orbits}
\label{S; ReflectiveOrbits}
A common example of reflective projective varieties comes from group actions.  
Let $G$ be a connected algebraic group and  $V$ be a linear $G$ representation of dimension $N$ with finite orbits $\cO_1,\cdots ,\cO_{s}$. From Proposition \ref{prop; duality} we can label the dual orbits by the duality correspondence: $\PP(\bar{\cO}_k)$ is dual to $\PP(\bar{\cO}'_{s-k})$.  
\begin{defi}
\label{assu; reflective}
We call such representation $V$ \textit{reflective} if the following assumption is satisfied. 
\begin{enumerate}
\item The $G$ action contains the scalar multiplication: the orbits are $k^*$ cones. 
\item For any $i$ we have $\bar{\cO}_i=\cup_{j\geq i}  \cO_j$ and $\bar{\cO}'_i=\cup_{j\geq i}  \cO'_j$. 
\item All the orbits $\cO_i$ and $\cO'_i$ are purely dimensional.
\end{enumerate}
The  assumptions forces  $\cO_s=\cO'_s=\{0\}$.
We call such group orbits \textit{Reflective Orbits}.
\end{defi}
 
\begin{coro}
\label{coro; ReflectiveOrbits}
For any $i$ we denote  $S_i=\PP(\cO_i)$ and $S'_i=\PP(\cO'_i)$ to be the projectivized orbits  in $\PP(V)$ and $\PP(V^*)$. Let $I=I_{N-1}$ be  Aluffi's projective duiality involution. Let $c_{sm}^{S_i}=c_{sm}^{S_i}(H)$ and $c_{sm}^{S'_i}=c_{sm}^{S'_i}(H)$ be the Chern-Schwartz-MacPherson polynomials in $H$.  
For any $r$, assume that there are not-all-zeros integers $\{\alpha^r_i|i=r,\cdots, s-1\}$ and $\{\beta^r_j|j=s-r, \cdots, s-1\}$ such that 
\[
I_{N-1}\left(\sum_{i\geq r} \alpha^r_i\cdot c_{sm}^{S_i}\right)=\sum_{k\geq s-r}  \beta^r_i \cdot c_{sm}^{S'_k}; \quad \alpha^r_r=\beta^r_{s-r}=1 \/.
\]
Then we have
\[
\alpha^r_i=Eu_{\bar{S}_r}(S_i); \quad  \beta^r_i=Eu_{\bar{S}'_{s-r}}(S'_i) \/.
\]
\end{coro}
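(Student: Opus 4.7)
The plan is to apply Theorem~\ref{theo; reflectiveProjVar} to each orbit closure $\bar{S}_r$ individually, treating it as a reflective projective variety in its own right. The corollary then becomes a direct translation of the theorem's uniqueness statement into the indexing scheme natural for orbital stratifications.

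First, I would verify that $\bar{S}_r \subset \PP(V)$ satisfies the three conditions of Definition~\ref{defi; reflectiveProjVar}. The stratification $\bar{S}_r = \sqcup_{i=r}^{s-1} S_i$ is inherited directly from the orbit-closure decomposition $\bar{\cO}_r = \cup_{j \geq r} \cO_j$ in Definition~\ref{assu; reflective}(2), together with the fact that the trivial orbit $\cO_s = \{0\}$ drops out under projectivization. The descending closure chain $\bar{S}_{s-1} \subset \bar{S}_{s-2} \subset \cdots \subset \bar{S}_r$ follows immediately. For the dimension condition, I would invoke Proposition~\ref{prop; duality}: since $G$ acts with finitely many cone orbits on $V$, the projectivized orbit closures of $V$ and $V^*$ are in projective duality under the labeling $\bar{S}_i^\vee = \bar{S}'_{s-i}$. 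As $i$ increases from $r$ to $s-1$, the index $s-i$ decreases, so $\bar{S}'_{s-i}$ ascends in the dual closure chain; by the pure-dimensionality assumption (Definition~\ref{assu; reflective}(3)) the dimensions are strictly increasing, yielding $\dim \bar{S}_r^\vee < \dim \bar{S}_{r+1}^\vee < \cdots < \dim \bar{S}_{s-1}^\vee$ as required. The dual variety $(\bar{S}_r)^\vee = \bar{S}'_{s-r}$ inherits the analogous stratification $\sqcup_{j=s-r}^{s-1} S'_j$, verifying condition~(3).

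Having established that $\bar{S}_r$ is reflective, Theorem~\ref{theo; reflectiveProjVar} applies verbatim: under the normalization that the coefficients of the two open strata equal $1$, the integers appearing in the involution equality are forced to coincide with the local Euler obstructions. Translating the theorem's $\alpha_1 = \beta_1 = 1$ normalization into the orbit indexing of the corollary identifies the open strata as $S_r$ in $\bar{S}_r$ and $S'_{s-r}$ in $\bar{S}'_{s-r}$, giving the normalizations $\alpha^r_r = \beta^r_{s-r} = 1$. The conclusion $\alpha^r_i = Eu_{\bar{S}_r}(S_i)$ and $\beta^r_j = Eu_{\bar{S}'_{s-r}}(S'_j)$ follows from the uniqueness of the solution to the triangular linear system produced in the proof of the theorem.

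The main obstacle is purely bookkeeping: aligning the generic indexing of Definition~\ref{defi; reflectiveProjVar} (where the ambient variety is $X_1$ and strata are indexed so that dual dimensions grow with the index) with the orbit indexing of Definition~\ref{assu; reflective} (where $\cO_s = \{0\}$ is the smallest orbit), so that the duality correspondence $\bar{S}_i^\vee = \bar{S}'_{s-i}$ correctly produces the strictly increasing dimension chain demanded by the theorem. Once this correspondence is pinned down, no further geometric input is needed beyond Proposition~\ref{prop; duality} and the reflective-orbit hypotheses.
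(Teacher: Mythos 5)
Your proposal is correct, and it takes a cleaner route than the paper's own proof. The paper proves the corollary by re-running the computation of Theorem~\ref{theo; reflectiveProjVar} in the orbit indexing: it expands each $c_{sm}^{S_i}$ and $c_{sm}^{S'_j}$ in the signed Chern--Mather classes $m_i$, $m'_j$, applies the involution term by term using $I(m_k)=m'_{s-k}$, and then appeals to ``the same argument'' (the triangular linear system with unit diagonal) to conclude uniqueness. You instead observe that each orbit closure $\bar{S}_r=\sqcup_{i\geq r}S_i$ is \emph{itself} a reflective projective variety in the sense of Definition~\ref{defi; reflectiveProjVar} --- the closure chain comes from Definition~\ref{assu; reflective}(2), the strictly increasing dual-dimension chain comes from the labeling $\bar{S}_i^\vee=\bar{S}'_{s-i}$ of Proposition~\ref{prop; duality} together with the fact that $\bar{S}'_{j+1}$ is a proper closed subset of the purely dimensional $\bar{S}'_j$ (this is where pure dimensionality is genuinely used, to force strict inequality), and the dual stratification is $\bar{S}'_{s-r}=\sqcup_{j\geq s-r}S'_j$ --- so Theorem~\ref{theo; reflectiveProjVar} applies verbatim to each $r$ after relabeling. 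What your reduction buys is economy and a conceptual point the paper leaves implicit: the corollary is not a parallel statement requiring a parallel proof, but literally an instance of the theorem applied to every orbit closure, not just the top one. What the paper's version buys is that all index translations ($i\mapsto s-i$ under duality, the normalization $\alpha^r_r=\beta^r_{s-r}=1$ matching the theorem's $\alpha_1=\beta_1=1$) are carried out explicitly, which is exactly the bookkeeping you correctly flag as the only real content of the reduction.
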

\begin{proof}
Denote $m_i$ and $m'_j$ to be the sighed Chern-Mather class polynomials: $m_i:=(-1)^{\dim S_i} c_M^{\bar{S}_i}(H)$ and $m'_j:=(-1)^{\dim S'_j} c_M^{\bar{S}'_j}(H)$. For any $r$ there are unique integers $E_{r,i}$ and $E'_{s-r,j}$ such that $E_{r,r}=(-1)^{\dim S_r}$, $E'_{s-r,s-r}=(-1)^{\dim S'_{s-r}}$ and
\begin{align*}
c_{sm}^{S_r}(H)=& \sum_{i=r}^{s-1} E_{r,i}\cdot (-1)^{\dim S_i} c_M^{\bar{S}_i}(H)  =\sum_{i=r}^{s-1} E_{r,i} m_i;\\
c_{sm}^{S'_{s-r}}(H)=& \sum_{j=s-r}^{s-1} E'_{s-r,j}\cdot (-1)^{\dim S'_i} c_M^{\bar{S}'_i}(H)=\sum_{j=s-r}^{s-1} E'_{r,i} m'_j \/.
\end{align*}
Since $\bar{S}_k$ is dual to $\bar{S}'_{s-k}$, recall from Proposition~\ref{S; projDual} that the involution $I_{N-1}$ then takes $m_k$ to $m'_{s-k}$.
Thus the involution equality can be written as
\begin{align*}
& \sum_{k\geq s-r}  \beta^r_k \cdot c_{sm}^{S_k}(H)
= \sum_{k=s-r}^{s-1} \sum_{j=s-r}^k \beta^r_iE'_{j,k} m'_k  \\
=&I_{N-1}\left(\sum_{k\geq r} \alpha^r_k \cdot c_{sm}^{S_k}(H) \right)
= I_{N-1}\left(\sum_{k=r}^{s-1}  \alpha^r_k \cdot \sum_{i=k}^{s-1} E_{k,i} m_i \right) \\
=&\sum_{k=r}^{s-1}  \alpha^r_k \cdot \sum_{i=k}^{s-1} E_{k,i} m'_{s-i} 
=  \sum_{k=1}^{s-r } \sum_{i=r}^{s-k} \alpha^r_iE_{i,s-k} m'_k  
\end{align*}
Since $\alpha^r_r=\beta^r_{s-r}=1$, we have
\[
m'_{s-r}+\sum_{k=s-r+1}^{s} \sum_{i=s-r}^k \beta^r_jE'_{j,k} m'_k 
=
\sum_{k=1}^{s-r-1} \sum_{i=r}^{s-k} \alpha^r_iE_{i,s-k} m'_k + m'_{s-r} \/.
\]
The corollary then follows from the same argument used in the  proof of Theorem~\ref{theo; reflectiveProjVar}.
\end{proof}

An advantage of reflective group orbits is that,
 as pointed out in \cite{FRW18}\cite{RV18}, besides the  classical approach from intersection theory  the Chern-Schwartz-MacPherson classes of such group orbits  can also be obtained from representation theory. This provides more choices in practice. 
In this case we have the following algorithm to compute the local Euler obstructions:
\begin{algo}[Algorithm for Reflective Group Orbits]
\label{OrbAlgorithm}
The following algorithm computed the  local Euler obstructions for Reflective Group Orbits.
\begin{enumerate}
\item[Step 0] For each pairs of orbits $\cO_i$  and $\cO_j$, notice that  
$Eu_{\bar{\cO}_i}(\cO_j)=Eu_{\bar{S}_i}(S_j)$ passes to the projective setting for $j\leq s$. 
\item[Step 1] For each projectivized orbit $S_i$ we compute its Chern-Schwartz-MacPherson class $c_{sm}^{S_i}(H)$.
\item[Step 2] For each projectivized orbit $S'_i$ we compute its Chern-Schwartz-MacPherson class $c_{sm}^{S'_i}(H)$.
\item[Step 3] For each $r$ we set up the following linear system 
\begin{align*}
\sum_{k=r}^{s-1} x^r_k c_{sm}^{S_k}(H) -\sum_{k=s-r}^{s-1} y^r_{k} c_{sm}^{S'_k}(H)=0; x^r_r=y^{r}_{s-r}=1 \/. 
\end{align*}
This is a linear system since  $c_{sm}^{S_k}(H)$ 
and $c_{sm}^{S'_k}(H)$ 
are polynomials in $H$, and the equality gives at least $s+1$ linear equations concerning the coefficients of powers of $H$. As proved in the previous Proposition, the solution $\{x^r_k, y^r_{k}\}$ are the  local Euler obstructions of $\bar{\cO}_r$ and $\bar{\cO}'_{s-r}$ at each stratum.
\item[Step 4]The local Euler obstruction  of $\bar{\cO}_r$ at $\cO_s=\{0\}$ come  from the algebraic Brasselet-Lê-Seade type formula proved in \cite{Xiping3}. 
\[
Eu_{\bar{\cO}_r}(0)=\sum_{k\geq r} (-1)^{\dim S_k}c_{sm}^{S_k}(-1)\cdot Eu_{\bar{\cO}_r}(\cO_k) \/.
\]
Same argument applies to  $Eu_{\bar{\cO}'_{s-r}}(0)$.
\end{enumerate}
\end{algo}

\begin{rema}
\label{rema; FRW}
More precisely, in \cite{FRW18}\cite{RV18} they proved that the equivariant Chern-Schwartz-MacPherson classes are characterized by certain Axioms. The equivariant classes are expressed as polynomials in the weights of the $G$ action, 
the axioms then provide constraint equations  the  Chern polynomials have to satisfy by studying localizations and specializations/restrictions. The solutions are certain weight functions from representation theory. 
In fact, in \cite{FRW18} they proved that for such orbits the solutions correspond to the $K$-theoretic stable envelops studied in \cite{Maulik-Okounkov}. Thus this algorithm provides a way to study the singularities of orbits closures directly from representation theory. 
\end{rema}

\begin{exam}
We use the following example to illustrate the algorithm.
We consider the symmetric rank stratification $M^S_3=\cup_{i=0}^3 \Sigma^{S \circ}_{3,i}$. From the computation in \cite{Xiping4} we have
\begin{align*}
c_{sm}^{\tau^{S \circ}_{3,0}}
=& 3H^4+6H^3+6H^2+3H+1   \\
c_{sm}^{\tau^{S \circ}_{3,1}}
=& 3H^5+6H^4+10H^3+9H^2  + 3H; \quad c_{sm}^{\tau^{S \circ}_{3,1}}(-1)=-1 \\
c_{sm}^{\tau^{S}_{3,2}}
=&  3H^5  + 6H^4  + 4H^3 ; \quad c_{sm}^{\tau^{S}_{3,2}}(-1)=-1
\end{align*}
The orbits $\tau^S_{3,1}$ is dual to $\tau^S_{3,2}$, with dimension $7$ and $4$ respectively. 
Then we have
\[
c_{sm}^{\tau^{S \circ}_{3,1}}(-1-H)+((1+H)^6-H^6)
+x\cdot c_{sm}^{\tau^{S}_{3,2}}(-1-H)+x\cdot((1+H)^6-H^6)=c_{sm}^{\tau^{S}_{3,2}}(H) \/.
\]
Expand the polynomials we get
\[
3H^5+6H^4+4H^3+x\cdot (3 H^5+6 H^4+10 H^3+9 H^2+3 H)=3H^5+6H^4+4H^3 \/.
\]
There is a unique solution $x=0$. This shows that $Eu_{\Sigma^{S }_{3,1}}(\Sigma^{S \circ}_{3,2})=0$. The Brasselet-Lê-Seade  formula then gives 
\[
Eu_{\Sigma^{S }_{3,1}}(0)=Eu_{\Sigma^{S }_{3,1}}(\Sigma^{S \circ}_{3,1})\cdot c_{sm}^{\tau^{S \circ}_{3,1}}(-1)+Eu_{\Sigma^{S }_{3,1}}(\Sigma^{S \circ}_{3,2})\cdot c_{sm}^{\tau^{S}_{3,2}}(-1)
=(-1)\cdot (-1)+ (-1)\cdot 0=1 \/.
\]
\end{exam}

\section{Application I: Quadratic Hypersurfaces}
\label{S; quadratic}
Let $X_A\subset \PP(V)$ be a quadratic hypersurface defined by $x^tAx=0$ for some symmetric $n+1\times n+1$ matrix $A$. Here $\dim V=n+1$. The hypersurface $X_A\subset \PP(V^*)$ is smooth if and only if $|A|\neq 0$, and the dual variety $X_A^\vee$ is also a smooth hypersurface.  When $A$ is singular of rank $r\leq n$,  the singular locus $S_A$ is the projective subspace $\PP(K)=\PP^{n-r}$, where $K=\ker A\subset V$. 

When $r=1$, the defining equation of $X_A$ is given by $l^2$ for some linear form $l$. Thus the reduced structure of $X_A$ is  smooth: it is a hyperplane. When $r=2$, the defining equation of $X_A$ is given by $l_1\cdot l_2$ for two distinct linear forms $l_1,l_2$. Thus the hypersurface $X_A$ is the union of two hyperplanes intersecting transversely at $\PP^{n-2}$. After taking normal slice this case is the nothing but the line arrangement $V(xy=0)\subset \PP^2$.   
So we will assume $r\geq 3$ in this section.

Let $K^{\perp}\subset V^*$ be the space of linear forms that vanish on $K$.
As shown in \cite{GKZ},  the dual variety $X_A^\vee$ is a smooth 
quadratic hypersurface in the linear subspace $\PP(K^{\perp})=\PP^{r-1}$.  The dual variety $S_A^\vee$ is isomorphic to the linear subspace $\PP^{r-1}$. 

\begin{rema}
In general, let  $A$ be a $m\times n$ matrix with $m\geq n$, we may consider the hypersurface in $\PP^{m+n-1}$ of the form $X^tAY=0$. This hypersurface is exactly the  hypersurface $X_B$, where $B$ is the $(m+n)\times (m+n)$ symmetric matrix
$
\begin{bmatrix}
0 & A \\
A^t & 0
\end{bmatrix}
\/.
$
\end{rema}
 
\begin{coro}
\label{coro; quadratic}
Let $r=r(A)$ be the rank of $A$. We assume that $r\geq 3$. 
\begin{enumerate}
\item Let $\Sigma_A\subset V$ be the affine cone of $X_A$. 
The Euler characteristic of the complex link space of $\Sigma_A$ at $0$ is $-2$.
\item The  Milnor number of $X_A$ at any point of $S_A$ equals $(-1)^{n+r}$. 
\item The Milnor class of $X_A$ is given by 
\[
\cM(X_A)=(-1)^{n+r}\sum_{k=0}^{n-r} \left(
\sum_{j=0}^k \binom{n-r+1}{k-j} (-2)^j
\right)\cdot H^{k+r}
\]
\item We denote $X_A^\circ:=X_A\setminus S_A$. The local Euler obstruction  of $X_A$ then equals:
$$Eu_{X_A}=\id_{X_A^\circ}+ ((-1)^r+1) \id_{S_A} \/.$$  
\end{enumerate}
\end{coro}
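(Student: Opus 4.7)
The plan is to deduce all four statements from a single computation of $\mathrm{Eu}_{X_A}(S_A)$ obtained by applying Theorem~\ref{theo; reflectiveProjVar} to the reflective pair $(X_A, X_A^\vee)$.

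First I would set up the reflective stratification: $X_A = X_A^\circ \sqcup S_A$ on the primal side and $\PP(K^\perp) = (\PP^{r-1}\setminus X_A^\vee)\sqcup X_A^\vee$ on the dual side, verifying the dimension chain $\dim X_A^\vee = r-2 < r-1 = \dim\PP^{r-1}$ required by Definition~\ref{defi; reflectiveProjVar}. Then I would assemble the four CSM polynomials in $A_*(\PP^n) = \ZZ[H]/H^{n+1}$:
\[
c_{sm}^{S_A}(H) = H^r(1+H)^{n-r+1}, \qquad c_{sm}^{\PP^{r-1}}(H) = H^{n-r+1}(1+H)^r,
\]
\[
c_{sm}^{X_A^\vee}(H) = H^{n-r+1}\cdot\frac{2H(1+H)^r}{1+2H},
\]
the last being the standard CSM polynomial of a smooth degree-$2$ hypersurface in $\PP^{r-1}$, pushed into $\PP^n$. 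The remaining class $c_{sm}^{X_A^\circ}(H) = c_{sm}^{X_A}(H) - c_{sm}^{S_A}(H)$ is obtained from the description of $X_A$ as the projective join of the smooth quadric $Q_0 \subset \PP^{r-1}$ with the linear vertex $\PP^{n-r}$, or equivalently from the fact that $X_A^\circ$ is the total space of $\cO_{Q_0}(1)^{\oplus(n-r+1)} \to Q_0$; this produces a closed rational expression in $H$.

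The heart of the argument is then the reflective identity
\[
I_n\bigl(c_{sm}^{X_A^\circ} + \alpha\cdot c_{sm}^{S_A}\bigr) = c_{sm}^{\PP^{r-1}\setminus X_A^\vee} + \beta\cdot c_{sm}^{X_A^\vee},
\]
with $\alpha = \mathrm{Eu}_{X_A}(S_A)$ and $\beta = \mathrm{Eu}_{\PP^{r-1}}(X_A^\vee) = 1$ (since the dual ambient $\PP^{r-1}$ is smooth, all its Euler obstructions are $1$). Theorem~\ref{theo; reflectiveProjVar} guarantees a unique integer solution for $\alpha$; substituting the four CSM polynomials, applying $I_n(f)(H) = f(-1-H) - f(-1)\bigl((1+H)^{n+1}-H^{n+1}\bigr)$, and matching a single coefficient on both sides forces $\alpha = 1 + (-1)^r$. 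Combined with $\mathrm{Eu}_{X_A}(X_A^\circ) = 1$ this establishes statement~(4).

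Statements~(1)--(3) then follow from (4). For~(1), Proposition~\ref{prop; affineproj} gives $\mathrm{Eu}_{\Sigma_A}(0) = 1 + (-1)^r$; combined with the identification of $\mathrm{Eu}$ at the apex of an affine cone with the Euler characteristic of the complex link (the affine analogue of the Brasselet-L\^e-Seade formula used in Step~4 of Algorithm~\ref{OrbAlgorithm}), this forces $\chi(\text{complex link}) = -2$. For~(2), a normal slice to $S_A$ at any point is the isolated ordinary double point $y_1^2 + \cdots + y_r^2 = 0$ in $\CC^r$, whose local Milnor fibre has Euler characteristic $1 + (-1)^{r-1}$; Parusinski's formula $\mu_p(X) = (-1)^{\dim X}(\chi(F_p) - 1)$ then yields $\mu(X_A,p) = (-1)^{n+r}$. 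For~(3), use $\cM(X_A) = (-1)^{\dim X_A}\bigl(c_F(X_A) - c_{sm}(X_A)\bigr)$ with Fulton's virtual class $c_F(X_A) = 2H(1+H)^{n+1}/(1+2H)$; the difference, expressed using $\alpha = 1 + (-1)^r$ and the geometric expansion $1/(1+2H) = \sum_j(-2)^j H^j$, collapses to the stated binomial-coefficient sum supported on $S_A$. The main obstacle is the explicit CSM computation for $X_A^\circ$ and verifying that the involution identity cleanly pins $\alpha$ to $1 + (-1)^r$; once that single integer is in hand, the remaining items reduce to polynomial bookkeeping together with the standard translations between $\mathrm{Eu}$, complex link Euler characteristics, and Milnor invariants.
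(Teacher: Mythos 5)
There is a genuine gap at the step you call the heart of the argument: the reflective identity is set up incorrectly. In Theorem~\ref{theo; reflectiveProjVar} the right-hand side runs over the strata of $Y=X^\vee$; for $X=X_A$ the dual variety is the \emph{smooth} quadric $X_A^\vee\subset\PP(K^\perp)$, which has a single stratum, so the correct identity (with the signs $(-1)^{\dim}$ that the involution requires) is $I_n\bigl(c_{sm}^{X_A^\circ}+\alpha\,c_{sm}^{S_A}\bigr)=(-1)^{n+r-1}c_{sm}^{X_A^\vee}$, and $\alpha$ is pinned down precisely by the requirement that the component along $c_M^{S_A^\vee}=H^{n-r+1}(1+H)^r$ \emph{vanish} (the triangularity argument in the theorem's proof). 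Your version instead puts $c_{sm}^{\PP^{r-1}\setminus X_A^\vee}$ with coefficient $1$ on the right. Since $I_n(c_{sm}^{S_A})=\pm\,c_{sm}^{\PP^{r-1}}$ and $I_n(c_M^{X_A})=\pm\,c_{sm}^{X_A^\vee}$, the left side decomposes as $\pm c_{sm}^{X_A^\vee}+(\alpha-e)(\pm c_{sm}^{\PP^{r-1}})$ with $e=Eu_{X_A}(S_A)$; demanding that the $c_{sm}^{\PP^{r-1}}$-component equal $1$ rather than $0$ makes the system either unsolvable or yields the wrong $\alpha$. The strategy is right, but as written the key equation does not close; note also that your route requires the explicit class $c_{sm}^{X_A^\circ}$ (via the join structure over $Q_0$), which you flag but do not carry out, whereas the paper sidesteps it entirely by treating $\lambda=c_{sm}^{X_A}(-1)$, $e$, and $\mu$ as three unknowns in a single involution identity built from the Milnor-class relation $\cM(X_A)=\mu\,c_{sm}^{S_A}/(1+2H)$.

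A second, smaller gap is the deduction of item (1). Proposition~\ref{prop; affineproj} only identifies $Eu_{\Sigma_A}(q)$ with $Eu_{X_A}(\bar q)$ for $q\neq 0$; it says nothing about the vertex, so it does not give $Eu_{\Sigma_A}(0)=1+(-1)^r$, and in any case $Eu$ at the vertex is not the Euler characteristic of the complex link (the Brasselet--L\^e--Seade formula expresses the former as a weighted sum involving the latter together with the link of the smaller stratum). The complex link of $\Sigma_A$ at $0$ is $X_A$ minus a general hyperplane section, so its Euler characteristic is $(-1)^{n-1}c_{sm}^{X_A}(-1)$; this is an invariant of $c_{sm}^{X_A}$, not of $Eu_{X_A}(S_A)$, and it is exactly the quantity $\lambda$ that the paper solves for alongside $e$ and $\mu$. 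Your normal-slice computation for item (2) and the expansion of $\mu\,c_{sm}^{S_A}/(1+2H)$ for item (3) are fine, but (1) needs either the explicit $c_{sm}^{X_A}$ or an extra equation of the paper's type.
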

\begin{proof}
The Milnor class of $X_A$ can be uniquely written as 
\begin{equation}
\label{eq; Milnor}
\cM(X_A) =(-1)^{n-1}\left(\frac{2H((1+H)^{n+1}-H^{n+1})}{1+2H}-c_{sm}^{X_A}(H) \right)= \frac{\mu\cdot c_{sm}^{S_A}}{1+2H} \/.
\end{equation}
Let $e=Eu_{X_A}(S_A)$ be the local Euler obstruction of $X_A$ at $S_A$, and we can uniquely write 
\[
c_{sm}^{X_A}=c_M^{X_A}- (e-1)c_M^{S_A}=c_M^{X_A}- (e-1)c_{sm}^{S_A} \/.
\]
We denote $\alpha :=e-1$, $\lambda:=c_{sm}^{X_A}(-1)$ and $B_n:=(1+H)^{n+1}-H^{n+1}$.
Then we consider the following polynomial:
\[
P:= (-1)^{n-1}\left( 
2HB_n-(1+2H)(c_M^{X_A}- \alpha \cdot c_{M}^{S_A})\right)-\mu c_{M}^{S_A} \/.
\]
From (\ref{eq; Milnor}) we know that $P=0$. Let $\cI_n$ be the duality involution,  then we have $\cI_n(P)=0$.

On the other hand, we can directly compute $I_n(P)$. From Proposition~\ref{prop; involution} we have:
\[
\cI_n\left(
2H(c_M^{X_A}- \alpha \cdot c_{M}^{S_A})
\right)
= (-2-2H)\cI_n(c_M^{X_A}- \alpha \cdot c_{M}^{S_A})-\lambda HB_n \/.
\]
Meanwhile, by a direct computation we have
$
\cI_n(HB_n)=(-1)^{n+1} HB_n \/.
$
Thus we have the following
\begin{align*}
\cI_n(P)=& (-1)^{n-1}  
\cI_n(2HB_n)-\cI_n(c_M^{X_A}- \alpha  c_{M}^{S_A})-\cI_n(2H(c_M^{X_A}- \alpha  c_{M}^{S_A}))
-\cI_n(\mu c_{M}^{S_A}) \\
=& 2HB_n -(-1)^{n-1}\left( (-1)^{n+r-1} c_M^{X^\vee_A}- (-1)^{n-1} \alpha  c_{M}^{S^\vee_A} \right)+ (-1)^{n-1}\lambda HB_n \\
+& (-1)^{n-1} (2+2H)  \left( (-1)^{n+r-1} c_M^{X^\vee_A}- (-1)^{n-1} \alpha  c_{M}^{S^\vee_A} \right) 
-(-1)^{n-1}\mu c_{M}^{S^\vee_A} \\
=& (2+(-1)^{n-1}\lambda)HB_n +(-1)^r(1+2H)c_M^{X^\vee_A}-(1+2H) \alpha  c_{M}^{S^\vee_A} -(-1)^{n-1}\mu c_{M}^{S^\vee_A} \
=& 0
\end{align*}
Notice that $X_A^\vee$ is a smooth 
quadratic hypersurface in  $ \PP^{r-1}$, and  $S_A^\vee$ is isomorphic to  $\PP^{r-1}$. Thus we have
\[
c_M(X_A^\vee)=c_{sm}^{X_A^\vee}=\frac{2H^{n-r+2}(1+H)^r}{1+2H}; \quad
c_M(S_A^\vee)=c_{sm}^{S_A^\vee}=H^{n-r+1}(1+H)^r \/.
\]
Thus we have
\begin{align*}
0=& (2+(-1)^{n-1}\lambda)H\left((1+H)^{n+1}-H^{n+1}\right)-(-1)^{n-1}\mu H^{n-r+1}(1+H)^r \\
 & +(-1)^r 2H^{n-r+2}(1+H)^r-(1+2H) \alpha H^{n-r+1}(1+H)^r  \/.
\end{align*}
This gives the solution 
\[
\lambda=(-1)^n\cdot 2; \quad e=\alpha+1=(-1)^r+1 ; \quad \mu= (-1)^{n+r} \/.
\]
\end{proof}

\section{Application II: Generic Determinantal Varieties}
\label{S; DerVar}
The (generic) determinantal varieties are fundamental examples of non-isolated singularities, and are of great interest in singularity theory. For ordinary determinantal varieties the local Euler obtructions were computed in \cite{NG-TG} using topological method, and in \cite{Xiping2} for arbitrary algebraically closed field using intersection theory. In this section we use Corollary~\ref{coro; ReflectiveOrbits}  to  give a very short proof, from an  observation on the Chern-Schwartz-MacPherson class formula. 

\subsubsection{$q$-Polynomial of Determinantal Varieties}
Let $K$ be a characteristic $0$ algebraically closed field. 
Let $M_{m,n}=Hom(K^n,K^n)$  be the space of $n\times n$ ordinary matrices . 
The group  $GL_n(K)\times GL_n(K)$ acts on $M_{n,n}$ by $(P,X,Q)\mapsto PXQ^{-1}$. The orbits consist of matrices of fixed rank. For $0\leq k\leq n-1$ we denote them by 
\[
\Sigma^\circ_{n,k}:=\{X|\dim \ker X = k\}; \quad \Sigma_{n,k}:=\{X|\dim \ker X \geq k\} \/.
\]
They are algebraic varieties of dimension $n^2-k^2$. 
We denote the projections $\PP(\Sigma^\circ_{n,k})$ and $\PP(\Sigma_{n,k})$ by $\tau^\circ_{n,k}$ and $\tau_{n,k}$ respectively. 

As shown in \cite{Xiping4}, we define the  $Q$ classes in the Grassmannian $G(r,n)$:
\[
q_{n,r}(d)
:=  \left( \sum_{k=0}^{n(n-r)} (1+d)^{n(n-r)-k}c_k(Q^{\vee n})  \right) \left(\sum_{k=0}^{nr} d^{nr-k}c_k(S^{\vee n}) \right) \/;  
\]
and the  $q$ polynomial in variable $q_{n,r}(H)$ by setting
\[
q_{n,r}(d)
:=  \int_{G(r,n)} c(S^\vee\otimes Q)\cdot q_{n,r}(d)\cap [G(r,n)] - d^{n^2} \binom{n}{r} 
\] 
for integers $d$. 
The Chern-Schwartz-MacPherson classes $c_{sm}^{\tau^\circ_{n,k}}$ are then given by 
\[
c_{sm}^{\tau_{n,k}^{\circ}}(H)
= \sum_{r=k}^{n-1} (-1)^{r-k}\binom{r}{k} \cdot q_{n,r}(H) \/.
\]
\subsubsection{Local Euler Obstruction of Determinantal Varieties}
Despite the complicated form, as we will show later,  the $q_{n,r}(d)$, enjoys very nice symmetric property: it is symmetric with the $d\mapsto -1-d$ substitution. This induces the following interesting application:
\begin{coro}
\label{coro; detvar}
The local Euler obstruction function of determinantal varieties are given by
$$Eu_{\Sigma_{n,k}}(\Sigma^\circ_{n,r})=\binom{r}{k} \/.
$$
Thus the polynomials $q_{n,r}(H)$ equal  the Chern-Mather classes: 
$
q_{n,r}(H)=c_M^{\tau_{n,k}}(H) \/.
$
\end{coro}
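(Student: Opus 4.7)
My approach is to apply Corollary~\ref{coro; ReflectiveOrbits} to the $GL_n\times GL_n$-action on $M_{n,n}$. The orbits $\Sigma^\circ_{n,k}$ are scalar cones, and the classical fact (cf.\ \cite{GKZ}) that the rank-$\le n-k$ variety $\Sigma_{n,k}$ is projectively dual to the rank-$\le k$ variety $\Sigma_{n,n-k}$ gives $\tau_{n,k}^\vee=\tau_{n,n-k}$, so Proposition~\ref{prop; duality} applies and the setup is reflective in the sense of Definition~\ref{assu; reflective}. Moreover, since $M_{n,n}$ is self-dual under transpose, the CSM polynomials on the dual side coincide with those on the primal side, so I may work with a single family $\{c_{sm}^{\tau^\circ_{n,k}}(H)\}$.

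Binomial M\"obius inversion of the stated formula for $c_{sm}^{\tau^\circ_{n,k}}$ yields
\[
q_{n,r}(H)\;=\;\sum_{k=r}^{n-1}\binom{k}{r}\,c_{sm}^{\tau^\circ_{n,k}}(H).
\]
Consequently the candidate linear combination appearing in Corollary~\ref{coro; ReflectiveOrbits}, with coefficients $\alpha^k_i=\binom{i}{k}$ on the primal side and $\beta^k_j=\binom{j}{n-k}$ on the dual side, is nothing but $q_{n,k}(H)$ and $q_{n,n-k}(H)$ respectively. Verifying the corollary's hypothesis therefore reduces to the single involution identity
\[
\mathcal{I}_{n^2-1}\bigl(q_{n,k}(H)\bigr)\;=\;q_{n,n-k}(H).
\]

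The key observation on the CSM formula alluded to in the opening of this section is the $d\mapsto -1-d$ symmetry of $q_{n,r}(d)$. I would derive this directly from the definition of $q_{n,r}(d)$ as an intersection number on $G(r,n)$, exploiting the Grassmann duality $G(r,n)\cong G(n-r,n)$ that exchanges the tautological sub and quotient bundles $S\leftrightarrow Q^\vee$. Under this swap, the two factors $(1+d)^{n(n-r)-k}c_k(Q^{\vee n})$ and $d^{nr-k}c_k(S^{\vee n})$ interchange, $c(S^\vee\otimes Q)$ is preserved, and the substitution $d\mapsto -1-d$ supplies the required signs; the normalisation $-d^{n^2}\binom{n}{r}$ in the definition of $q_{n,r}$ absorbs the $-f(-1)((1+H)^{n^2}-H^{n^2})$ piece built into Aluffi's involution, delivering the identity $\mathcal{I}_{n^2-1}(q_{n,k})=q_{n,n-k}$.

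Once this identity is established, the uniqueness statement of Corollary~\ref{coro; ReflectiveOrbits} forces $Eu_{\tau_{n,k}}(\tau^\circ_{n,r})=\binom{r}{k}$, and Proposition~\ref{prop; affineproj} transports this to the affine cones, proving the first claim. The identification $q_{n,r}(H)=c_M^{\tau_{n,r}}(H)$ is then immediate from the general identity $c_M^{\tau_{n,k}}=\sum_{r\ge k}Eu_{\tau_{n,k}}(\tau^\circ_{n,r})\,c_{sm}^{\tau^\circ_{n,r}}$ combined with the M\"obius-inverted formula. The main technical obstacle is the clean verification of the Grassmann-level symmetry: carefully tracking signs through the $d\mapsto -1-d$ substitution under the duality of tautological bundles, and confirming that the normalisation term in $q_{n,r}$ slots into precisely the right place relative to the correction piece in $\mathcal{I}_{n^2-1}$.
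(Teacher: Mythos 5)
Your proposal is correct and follows essentially the same route as the paper: both hinge on the Grassmann-duality computation (swapping $S\leftrightarrow Q^\vee$ under $G(r,n)\cong G(n-r,n)$) showing that $q_{n,r}(d)$ is carried to $q_{n,n-r}(d)$, up to sign and the normalisation term, by the substitution $d\mapsto -1-d$, combined with the uniqueness of solutions to the involution equation from Corollary~\ref{coro; ReflectiveOrbits}. The only cosmetic difference is that you produce the candidate coefficients $\binom{i}{k}$ by binomial M\"obius inversion of the stated CSM formula, whereas the paper quotes the equivalent fact from \cite{Xiping4} that $q_{n,r}=c_*(\phi_{n,r})$ for the binomial-valued constructible function $\phi_{n,r}$.
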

\begin{proof}
Substituting $d$ by $-1-d$ in the expression of $q_{n,r}(d)$ we have the following:
\begin{small}
\begin{align*}
& q_{n,r}(-1-d)+(-1-d)^{n^2} \binom{n}{r} \\
&= \int_{G(r,n)} c(S^\vee\otimes Q)  \left(\sum_{k=0}^{n(n-r)} (-d)^{n(n-r)-k}c_k(Q^{\vee n})  \right) \left(\sum_{k=0}^{nr} (-1-d)^{nr-k}c_k(S^{\vee n}) \right)   \\
&= \int_{G(n-r,n)} c(S^\vee\otimes Q)  \left(\sum_{k=0}^{n(n-r)} (-d)^{n(n-r)-k}c_k(S^{ n})  \right) \left(\sum_{k=0}^{nr} (-1-d)^{nr-k}c_k(Q^{ n}) \right)    \\
&= (-1)^{n^2} \int_{G(n-r,n)} c(S^\vee\otimes Q)  \left(\sum_{k=0}^{n(n-r)} d^{n(n-r)-k}c_k(S^{\vee n})  \right) \left(\sum_{k=0}^{nr} ( 1+d)^{nr-k}c_k(Q^{\vee n}) \right)   \\
=& (-1)^{n^2} q_{n,n-r}(d)+ (-1)^{n^2} d^{n^2} \binom{n}{n-r}  
\end{align*}
\end{small}
Recall that the dimension of $\tau_{n,r}$ is $n^2-r^2-1$. Since above computation holds for any integer $d$, thus as polynomial we have
\[
\hat{q}_{n,r}(-1-H)-\hat{q}_{n,r}(-1)  \left((1+H)^{n^2}-H^{n^2}  \right )
= \hat{q}_{n,n-r}(H) \/.
\]
Here $\hat{q}_{n,r}(H)=(-1)^{\dim \tau_{n,r}}\cdot q_{n,r}(H)$  is the signed polynomial. 
Recall from \S\ref{S; projDual} that the projective duality 
 operation $I:=I_{n^2-1}$ takes the signed Chern-Mather class of $\tau_{n,r}$ to the signed Chern-Mather class of $\tau_{n,n-r}$: the dual variety of $\tau_{n,k}$ is exactly $\tau_{n,n-k}$. 
Let $c_k:=(-1)^{\dim \tau_{n,k}} \cdot c_M^{\tau_{n,k}}$ be the Chern-Mather polynomials, 
 this says that
$I(c_i)=c_{n-i}$ for $i=1, \cdots ,n-1$.

Proved in \cite{Xiping4}, the $q$ polynomial $q_{n,r}(H)$ equals $c_*(\phi_{n,r})$ for some constructible function $\phi_{n,r}\in F(\tau_{n,r})$, defined by $\phi_{n,r}(p)=\binom{r}{l}$ for $p\in \tau^\circ_{n,l}$. $($In fact, $\phi_{n,r}$ is the pushforward $p_*(\id_{\hat{\tau}_{n,r}})$, where $p\colon \hat{\tau}_{n,r}\to \tau_{n,r}$ is the Tjurina transform.$)$
Since the local Euler obstructions is a basis for $F(\tau_{n,r})$, there are unique integers $\beta^r_k$ such that
$q_{n,r}(H)=\sum_{k=r}^{n-1} \beta^r_k c_M^{\tau_{n,k}}(H)$, $\beta_r^r=1$. Then the involution equality shows that 
\[
I(\hat{q}_{n,r}(H))=I(\sum_{k=r}^{n-1} \beta^r_k c_k )=\sum_{k=r}^{n-1} \beta^r_k c_{n-k} =\hat{q}_{n,n-r}(H)=\sum_{k=n-r}^{n-1} \beta^r_k c_{k} \/.
\]  
Since the lowest degree terms all the polynomials $c_i$ have different degrees, 
expand the above equalities for $r=1,2,\cdots ,n-1$ one can see that there is a unique solution
\begin{align*}
\beta_B^A=&
\begin{cases}
1 & A=B \\
0 & A\neq B 
\end{cases}
\end{align*}
This shows that $\phi_{n,r}=Eu_{\tau_{n,r}}$, and thus
$q_{n,r}(H)=c_M^{\tau_{n,r}}$ is the Chern-Mather polynomial.
\end{proof}

\begin{rema}
However we fail to obtain such an easy proof for the Chern-Mather classes of symmetric and skew-symmetric determinantal varieties. This is due to the term
$\left(\sum_{k=0}^{e} d^{e-k} c_k(S^\vee\otimes Q^\vee) \right)$, when substituting $d$ by $-1-d$ we lose the symmetry. The fact that the involution still interchanges the signed Chern-Mather classes indicates that there should be more symmetry and vanishing patterns hide behind the integrations. 
It would be interesting to use Schubert calculus to give a similar  proof  to the symmetric and skew-symmetric determinantal varieties.
\end{rema}

\bibliographystyle{plain}
\bibliography{ref}
\end{document}